\DeclareMathOperator{\pr}{\mathsf P}
\DeclareMathOperator{\M}{\mathsf E}
\DeclareMathOperator{\cov}{\mathbf{cov}}
\DeclareMathOperator{\D}{\mathsf D}
\newcommand{\prob}{\stackrel{\text{\rm P}}{\rightarrow}}
\newcommand{\probone}{\stackrel{\text{\rm P1}}{\rightarrow}}
\newcommand{\convdistr}{\stackrel{\text{\rm d}}{\rightarrow}}
\newcommand{\distr}{\stackrel{\text{\rm d}}{=}}
\numberwithin{equation}{section}
\crefname{equation}{equation}{}
\newtheorem{thm}{Theorem}
\newtheorem{lemma}[thm]{Lemma}
\theoremstyle{definition}
\newtheorem{defin}[thm]{Definition}
\newtheorem{rem}[thm]{Remark}
\renewcommand{\theenumi}{\roman{enumi}}
\renewcommand{\labelenumi}{(\roman{enumi}).}
\begin{document}
\begin{frontmatter}

\title{Goodness-of-fit test in a multivariate errors-in-variables model
$\boldsymbol{AX = B}$}
%



\author{\inits{A.}\fnm{Alexander}\snm{Kukush}\corref{cor1}}\email{alexander\_kukush@univ.kiev.ua}
\cortext[cor1]{Corresponding author.}
\author{\inits{Ya.}\fnm{Yaroslav}\snm{Tsaregorodtsev}}\email{777Tsar777@mail.ru}
\address{Taras Shevchenko National University of Kyiv, \xch{Kyiv}{Kuiv}, Ukraine}

\markboth{A. Kukush, Ya. Tsaregorodtsev}{Goodness-of-fit test in a
multivariate errors-in-variables model $AX = B$}

%


\begin{abstract}
We consider a multivariable functional errors-in-variables model \hbox
{$AX \approx B$}, where the data matrices $A$ and $B$ are observed with
errors, and a matrix parameter $X$ is to be estimated. A
goodness-of-fit test is constructed based on the total least squares
estimator. The proposed test is asymptotically chi-squared under null
hypothesis. The power of the test under local alternatives is discussed.
\end{abstract}

\begin{keyword}Goodness-of-fit test\sep local alternatives\sep
multivariate errors-in-variables model\sep power of test\sep
total least squares estimator
\MSC[2010]62J05\sep 62H15\sep 65F20
\end{keyword}

\received{10 November 2016}
%
\revised{3 December 2016}
%
\accepted{4 December 2016}
\publishedonline{20 December 2016}
\end{frontmatter}

\section{Introduction}\label{s:1}

We study an overdetermined system of linear equations $AX \approx B$,
which often occurs in the problems of dynamical system identification
\cite{huva}. If matrices $A$ and $B$ are observed with additive
uncorrelated errors of equal size, then the total least squares (TLS)
method is used to solve the system \cite{huva}.

In papers \cite{gl,kuhu,skl}, under various
conditions, the consistency of the TLS estimator $\hat{X}$ is proven
as the number $m$ of rows in the matrix $A$ is increasing, assuming
that the true value $A^0$ of the input matrix is nonrandom. The
asymptotic normality of the estimator is studied in \cite{gl} and \cite{kutsa}.

The model $AX \approx B$ with random measurement errors corresponds to
the vector linear errors-in-variables model (EIVM). In \cite{chku}, a
goodness-of-fit test is constructed for a polynomial EIVM with
nonrandom latent variable (i.e., in the \textit{functional} case); the
test can be also used in the \textit{structural} case, where the latent
variable is random with unknown probability distribution. A more
powerful test in the polynomial EIVM is elaborated in \cite{hallma}.

In the paper \cite{kupa}, a goodness-of-fit test is constructed for the
functional model $AX \approx B$, assuming that the error matrices
$\tilde{A}$ and $\tilde{B}$ are independent and the covariance
structure of $\tilde{A}$ is known. In the present paper, we construct a
goodness-of-fit test in a more common situation, where the total
covariance structure of the matrices $\tilde{A}$ and $\tilde{B}$ is
known up to a scalar factor. A test statistic is based on the TLS
estimator $\hat{X}$. Under the null hypothesis, the asymptotic behavior
of the test statistic is studied based on results of \cite{kutsa} and,
under local alternatives, based on \cite{skl}.

The present paper is organized as follows. In Section~\ref{s:2}, we
describe the observation model, introduce the TLS estimator, and
formulate known results on the strong consistency and asymptotic
normality of the estimator. In the next section, we construct the
goodness-of-fit test and show that the proposed test statistic has an
asymptotic chi-squared distribution with the corresponding number of
degrees of freedom. The power of the test with respect to the local
alternatives is studied in Section~\ref{s:4}, and Section~\ref{s:5}
concludes. The proofs are given in Appendix.

We use the following notation: $\|C\| = \sqrt{\sum_{i,j}c_{ij}^2}$ is
the Frobenius norm of a matrix $C = (c_{ij})$, and $\mathrm{I}_p$ is
the unit matrix of size $p$. The symbol $\M$ denotes the expectation
and acts as an operator on the total product of quantities, and $\cov$
means the covariance matrix of a random vector. The upper index $\top$
denotes transposition. In the paper, all the vectors are column ones.
The bar means averaging over\break $i=1, \ldots, m$, for example, $\bar
{a}:=m^{-1}\sum_{i=1}^{m}a_i$, $\overline{ab^{\top
}}:=m^{-1}\sum_{i=1}^{m}a_ib_i^{\top}$. Convergence with
probability one, in probability, and in distribution are denoted as
$\probone$, $\prob$, and $\convdistr$, respectively. A sequence of
random matrices that converges to zero in probability is denoted as
$o_p(1)$, and a sequence of stochastically bounded random matrices is
denoted as $O_p(1)$. The notation $\varepsilon\distr\varepsilon_1$
means that random variables $\varepsilon$ and $\varepsilon_1$ have the
same probability distribution. Positive constants that do not depend on
the sample size $m$ are denoted as $\mathit{const}$, so that equalities like
$2\cdot \mathit{const} = \mathit{const}$ are possible.

\section{Observation model and total least squares estimator}\label{s:2}
\subsection{The TLS problem}\label{s:2.1}

Consider the observation model
\begin{equation}
\label{OLSy} A^0X^0 = B^0, \qquad A =
A^0 + \tilde{A}, \qquad B = B^0 + \tilde{B},
\end{equation}
where $A^0\in\mathbb{R}^{m\times n}$, $X^0\in\mathbb{R}^{n\times d}$,
and $B^0\in\mathbb{R}^{m\times d}$. The matrices $A$ and $B$ contain
the data, $A^0$ and $B^0$ are unknown nonrandom matrices, and $\tilde
{A}$, $\tilde{B}$ are the matrices of random errors.

We can rewrite model \eqref{OLSy} in an implicit way. Introduce three
matrices of size $m\times(n+d)$:
\begin{equation}
\label{EM} C^0:= \bigl[A^0 \ \ B^0\bigr],
\qquad \tilde{C}:= [\tilde{A} \ \ \tilde{B}], \qquad C:= [A \ \ B].
\end{equation}
Then
\[
C = C^0 + \tilde{C}, \qquad C^0\cdot %
\begin{bmatrix}
X^0\\
-\mathrm{I}_d
\end{bmatrix}
=0.
\]

Let $A^{\top} = [a_1\dots a_m]$, $B^{\top} = [b_1\dots b_m]$, and we
use similar notation for the rows of the matrices $C$, $A^0$, $B^0$,
$\tilde{A}$, $\tilde{B}$, and $\tilde{C}$. Rewrite model \eqref{OLSy}
as a~multivariate linear one:
\begin{gather}
X^{0\top}a_i^0 = b_i^0,
\label{MLM1}
\\
b_i = b_i^0 + \tilde{b}_i,
\qquad a_i = a_i^0 + \tilde{a}_i;\quad
i=1, \ldots, m.\label{MLM2}
\end{gather}

Throughout the paper, the following assumption holds about the errors\break
$\tilde{c_i}=[\tilde{a_i}^\top\tilde{b}_i^\top]^\top$:
\renewcommand{\theenumi}{\roman{enumi}}
\renewcommand{\labelenumi}{(\roman{enumi})}
\begin{enumerate}
\item\label{i}
The vectors $\tilde{c}_i$, $i\geq1$, are i.i.d. with zero mean, and, moreover,
\begin{equation}
\label{CE} \cov(\tilde{c}_1) = \sigma^2
\mathrm{I}_{n+d},
\end{equation}
with unknown $\sigma>0$.
\end{enumerate}

Thus, the total error covariance structure is assumed to be known up to
a~scalar factor $\sigma^2$, and the errors are uncorrelated with equal
variances.

For model \eqref{OLSy}, the TLS problem lies in searching such
disturbances $\Delta\hat{A}$ and $\Delta\hat{B}$ that minimize the sum
of squared corrections
\begin{equation}
\label{Min1} \min_{(X\in\mathbb{R}^{n\times d},\Delta A,\Delta B)}\bigl(\|\Delta A\|^2 + \|
\Delta B\|^2\bigr),
\end{equation}
provided that
\begin{equation}
\label{Min2} (A - \Delta A)X = B - \Delta B.
\end{equation}
\subsection{The TLS estimator and its consistency}\label{s:2.2}

It can happen that for certain random realization, the optimization
problem \eqref{Min1}--\eqref{Min2} has no solution. In the latter case,
we set $\hat{X} = \infty$.

\begin{defin}
The TLS estimator $\hat{X}$ of the matrix parameter $X^0$ in the model
\eqref{OLSy} is a Borel-measurable function of the observed matrices
$A$ and $B$ such that its values lie in $\mathbb{R}^{n\times d}\cup\{
\infty\}$ and it provides a solution to problem \eqref{Min1}--\eqref
{Min2} in case there exists a solution, and $\hat{X} = \infty$ otherwise.
\end{defin}

We need the following conditions to provide the consistency of the estimator:
\begin{enumerate}\addtocounter{enumi}{1}
\item\label{ii}
$\M\|\tilde{c}_1\|^4 < \infty$.
\item\label{iii}
$\frac{1}{m} A^{0\top}A^0 \to V_A$ as $m\to\infty$, where $V_A$ is a
nonsingular matrix.
\end{enumerate}

The next result on the strong consistency of the estimator follows, for
example, from Theorem~4.3 in \cite{skl}.
\begin{thm}\label{th:2}
Assume conditions \eqref{i}--\eqref{iii}. Then, with probability one,
for all\break $m\geq m_0(\omega)$, the TLS estimator $\hat{X}$ is finite,
and, moreover, $\hat{X}\probone X^0$ as $m\to\infty$.
\end{thm}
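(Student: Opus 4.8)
This is a standard fact in TLS asymptotics, and the plan is to reduce it to Theorem~4.3 of \cite{skl}: I would verify that \eqref{i}--\eqref{iii} supply the hypotheses needed there, the essential input being the a.s.\ limit of the normalized Gram matrix $m^{-1}C^{\top}C$. Writing $C = C^0 + \tilde C$,
\[
\frac1m C^{\top}C = \frac1m C^{0\top}C^0 + \frac1m C^{0\top}\tilde C + \frac1m \tilde C^{\top}C^0 + \frac1m\sum_{i=1}^{m}\tilde c_i\tilde c_i^{\top}.
\]
Under \eqref{i} and \eqref{ii} the $\tilde c_i$ are i.i.d.\ with finite second moment, so the strong law of large numbers gives $m^{-1}\sum_{i=1}^m\tilde c_i\tilde c_i^{\top}\probone\sigma^2\mathrm{I}_{n+d}$; using \eqref{iii} and $B^0 = A^0X^0$ one gets the deterministic limit $m^{-1}C^{0\top}C^0\to\bigl[\begin{smallmatrix}V_A & V_AX^0\\ X^{0\top}V_A & X^{0\top}V_AX^0\end{smallmatrix}\bigr]$; and for the cross terms the summands $c_i^0\tilde c_i^{\top}$ are independent and mean zero with $\sum_i\|c_i^0\|^2/i^2<\infty$ (because $m^{-1}\sum_{i\le m}\|a_i^0\|^2 = m^{-1}\operatorname{tr}(A^{0\top}A^0)$ is bounded by \eqref{iii}, while $\|b_i^0\|\le\|X^0\|\,\|a_i^0\|$), so Kolmogorov's convergence criterion together with Kronecker's lemma yields $m^{-1}C^{0\top}\tilde C\probone 0$. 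Hence
\[
\frac1m C^{\top}C\probone\Psi := \begin{bmatrix}V_A + \sigma^2\mathrm{I}_n & V_AX^0\\ X^{0\top}V_A & X^{0\top}V_AX^0 + \sigma^2\mathrm{I}_d\end{bmatrix},\qquad \frac1m A^{\top}A\probone V_A + \sigma^2\mathrm{I}_n .
\]

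Next I would read off the spectrum of $\Psi$. Since $\Psi - \sigma^2\mathrm{I}_{n+d} = \bigl[\begin{smallmatrix}\mathrm{I}_n\\ X^{0\top}\end{smallmatrix}\bigr]V_A\bigl[\begin{smallmatrix}\mathrm{I}_n & X^0\end{smallmatrix}\bigr]$ is positive semidefinite of rank $n$ ($V_A$ is nonsingular, hence positive definite, being a nonsingular limit of the positive semidefinite matrices $m^{-1}A^{0\top}A^0$), the matrix $\Psi$ has smallest eigenvalue $\sigma^2$ of multiplicity \emph{exactly} $d$, with bottom eigenspace equal to the column span of $\bigl[\begin{smallmatrix}X^0\\ -\mathrm{I}_d\end{smallmatrix}\bigr]$ (which is annihilated by $[\mathrm{I}_n\ X^0]$), while the remaining $n$ eigenvalues are strictly larger than $\sigma^2$; in particular $\lambda_{\min}(V_A + \sigma^2\mathrm{I}_n)=\lambda_{\min}(V_A)+\sigma^2>\sigma^2$. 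Thus the limiting configuration is a genuine, nondegenerate TLS situation: the bottom eigenspace of $\Psi$ has dimension $d$ and its last $d\times d$ block ($-\mathrm{I}_d$) is invertible.

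The remainder is perturbation theory and is exactly what Theorem~4.3 of \cite{skl} delivers, so in the write-up I would invoke it. Informally: eigenvalues depend continuously on the matrix, so off a $\pr$-null set there is $m_0(\omega)$ such that for all $m\ge m_0$ the spectral gap of $m^{-1}C^{\top}C$ between its $n$-th and $(n+1)$-th eigenvalues persists — equivalently the singular-value separation that makes problem \eqref{Min1}--\eqref{Min2} uniquely solvable holds — so $\hat X$ is finite; moreover the invariant subspace of $m^{-1}C^{\top}C$ associated with the $d$ smallest eigenvalues converges a.s.\ to the span of $\bigl[\begin{smallmatrix}X^0\\ -\mathrm{I}_d\end{smallmatrix}\bigr]$, and since $\hat X$ is recovered from a basis of that subspace normalized to have last $d$ rows equal to $-\mathrm{I}_d$ (eventually possible, by transversality), continuity of this normalization gives $\hat X\probone X^0$.

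The main obstacle has two parts. First, $m^{-1}C^{0\top}\tilde C\probone 0$ is a law of large numbers for independent but \emph{not} identically distributed summands, so it needs the Kolmogorov--Kronecker device and the moment control extracted from \eqref{iii}, not merely the i.i.d.\ SLLN. Second, converting the spectral gap and eigenspace convergence of $m^{-1}C^{\top}C$ into finiteness and strong consistency of $\hat X$ — the genuinely nontrivial perturbation step — is precisely the content of Theorem~4.3 of \cite{skl}, which I would cite rather than reprove.
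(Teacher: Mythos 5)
Your proposal matches the paper's treatment: the paper gives no proof of Theorem~\ref{th:2} beyond the remark that it follows from Theorem~4.3 in \cite{skl}, which is exactly the reduction you make. The extra detail you supply --- the a.s.\ limit of $m^{-1}C^{\top}C$ via the SLLN and the Kolmogorov--Kronecker argument for the cross terms, and the identification of the $d$-dimensional bottom eigenspace of $\varPsi$ with the span of $\bigl[\begin{smallmatrix}X^0\\ -\mathrm{I}_d\end{smallmatrix}\bigr]$ --- is correct and simply fleshes out the hypotheses of that cited result.
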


Define the loss function $Q(X)$ as follows:
\begin{align}
q(a,b;X)&:=\bigl(a^{\top}X-b^{\top}\bigr) \bigl(
\mathrm{I}_d+X^{\top}X\bigr)^{-1}
\bigl(X^{\top
}a-b\bigr),\label{LFE}
\\
Q(X)&:=\sum_{i=1}^m q(a_i,b_i;X),
\quad X\in\mathbb{R}^{n\times d}. \label{LF}
\end{align}
It is known that the TLS estimator minimizes the loss function \eqref
{LF}; see formula (24) in \cite{kuhu}.

Introduce the following unbiased estimating function related to the
elementary loss function \eqref{LFE}:
\begin{equation}
\label{EF} s(a,b;X)=a\bigl(a^{\top}X-b^{\top}\bigr)-X\bigl(
\mathrm{I}_d+X^{\top}X\bigr)^{-1}
\bigl(X^{\top
}a-b\bigr) \bigl(a^{\top}X-b^{\top}\bigr).
\end{equation}
\begin{lemma}\label{l:3}
Assume conditions \eqref{i}--\eqref{iii}. Then, with probability one,
for all\break $m\geq m_0(\omega)$, the TLS estimator $\hat{X}$ is a solution
to the equation
\[
\sum_{i=1}^m s(a_i,b_i;X)=0,
\quad X\in\mathbb{R}^{n\times d}.
\]
\end{lemma}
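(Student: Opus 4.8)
The plan is to show that, on the event where $\hat X$ is finite and provides a genuine minimum of $Q$ (which, by Theorem~\ref{th:2}, has probability one for all $m\geq m_0(\omega)$), the estimating function $\sum_{i=1}^m s(a_i,b_i;X)$ is, up to a nonsingular matrix factor, the gradient of $Q$ at $X$; hence the first-order condition $\nabla Q(\hat X)=0$ is equivalent to $\sum_{i=1}^m s(a_i,b_i;\hat X)=0$. Concretely, I would first observe that $q(a,b;X)$ is smooth in $X$ on all of $\mathbb{R}^{n\times d}$ because $\mathrm{I}_d+X^\top X$ is always positive definite, so no regularity issue arises at an interior minimizer, and $\hat X$ is automatically interior since the parameter space is all of $\mathbb{R}^{n\times d}$.

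The core computation is differentiating the elementary loss $q(a,b;X)=(a^\top X-b^\top)(\mathrm{I}_d+X^\top X)^{-1}(X^\top a-b)$ with respect to $X$. Writing $r(X):=X^\top a-b\in\mathbb{R}^d$ and $M(X):=\mathrm{I}_d+X^\top X$, so that $q=r^\top M^{-1}r$, I would use $\mathrm{d}r = (\mathrm{d}X)^\top a$, $\mathrm{d}M = (\mathrm{d}X)^\top X + X^\top(\mathrm{d}X)$, and the identity $\mathrm{d}(M^{-1}) = -M^{-1}(\mathrm{d}M)M^{-1}$. Collecting terms,
\[
\mathrm{d}q = 2\,r^\top M^{-1}(\mathrm{d}r) - r^\top M^{-1}(\mathrm{d}M)M^{-1}r,
\]
and then matching $\mathrm{d}q = \operatorname{tr}\bigl((\nabla_X q)^\top\,\mathrm{d}X\bigr)$ yields
\[
\nabla_X q(a,b;X) = 2\,a\,r^\top M^{-1} - 2\,X\,M^{-1}r\,r^\top M^{-1}
= 2\bigl(\mathrm{I}_d+X^\top X\bigr)^{-1}\text{-weighted version of } s(a,b;X).
\]
More precisely, factoring out the common right factor $M^{-1}=(\mathrm{I}_d+X^\top X)^{-1}$ and using $r^\top=(a^\top X-b^\top)$, one gets
\[
\tfrac12\,\nabla_X q(a,b;X)
= \Bigl[a\bigl(a^\top X-b^\top\bigr) - X\bigl(\mathrm{I}_d+X^\top X\bigr)^{-1}\bigl(X^\top a-b\bigr)\bigl(a^\top X-b^\top\bigr)\Bigr]\bigl(\mathrm{I}_d+X^\top X\bigr)^{-1}
= s(a,b;X)\,\bigl(\mathrm{I}_d+X^\top X\bigr)^{-1}.
\]
Summing over $i$ gives $\tfrac12\,\nabla_X Q(X) = \bigl(\sum_{i=1}^m s(a_i,b_i;X)\bigr)(\mathrm{I}_d+X^\top X)^{-1}$.

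To finish, since $(\mathrm{I}_d+X^\top X)^{-1}$ is nonsingular for every $X$, the equation $\nabla_X Q(\hat X)=0$ holds if and only if $\sum_{i=1}^m s(a_i,b_i;\hat X)=0$. By Theorem~\ref{th:2}, with probability one there is $m_0(\omega)$ such that for all $m\geq m_0(\omega)$ the TLS estimator $\hat X$ is finite, and by the cited fact (formula (24) in \cite{kuhu}) it minimizes $Q$ over $\mathbb{R}^{n\times d}$; being an interior minimizer of a differentiable function, it satisfies $\nabla_X Q(\hat X)=0$, and therefore $\sum_{i=1}^m s(a_i,b_i;\hat X)=0$, as claimed. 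The only mildly delicate point is the matrix-calculus bookkeeping in identifying $\nabla_X q$ with $s$; the rest is immediate once that identity is in hand.
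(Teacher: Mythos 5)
Your proof is correct: the matrix-calculus identity $\tfrac12\nabla_X q(a,b;X)=s(a,b;X)(\mathrm{I}_d+X^{\top}X)^{-1}$ checks out, and combined with Theorem~\ref{th:2} and the fact that $\hat X$ minimizes $Q$ over all of $\mathbb{R}^{n\times d}$, the first-order condition gives exactly the stated estimating equation. The paper itself offers no proof, merely citing Corollary~4(a) of \cite{kutsa}; your argument is the natural self-contained version of that citation and fills in precisely the gradient bookkeeping the paper delegates.
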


In view of Theorem \ref{th:2}, the statement of Lemma \ref{l:3} follows
from Corollary~4(a) in \cite{kutsa}.
\subsection{Asymptotic normality of the estimator}\label{s:2.3}

We need further restrictions on the model. Recall that the augmented
errors~$\tilde{c}_i$ were introduced in Section~\ref{s:2.2}, and the
vectors $a_i^0$, $\tilde{b}_i$, and so on are those from model \eqref
{MLM1}--\eqref{MLM2}.
\begin{enumerate}\addtocounter{enumi}{3}
\item\label{iv}
$\M\|\tilde{c}_1\|^{4 + 2\delta} < \infty$ for some $\delta>0$;
\item\label{v}
For $\delta$ from condition \eqref{iv}, $\frac{1}{m^{1 +
\delta/2}}\sum_{i=1}^{m}\|a_i^0\|^{2 + \delta}\to0$ as $m\to\infty$.
\end{enumerate}

Denote by $\tilde{c}_1^{(p)}$ the $p$th coordinate of the vector $\tilde{c}_1$.
\begin{enumerate}\addtocounter{enumi}{5}
\item\label{vi} For all $p,q,r=1,\ldots, n+d$, we have $\M\tilde
{c}_1^{(p)}\tilde{c}_1^{(q)}\tilde{c}_1^{(r)} = 0$.
\end{enumerate}

Under assumptions \eqref{i} and \eqref{iv}, condition \eqref{vi} holds,
for example, in two cases: (a)~when the random vector $\tilde{c}_1$ is
symmetrically distributed, or (b) when the components of the vector
$\tilde{c}_1$ are independent and, moreover, for each $p=1, \ldots,
n+d$, the asymmetry coefficient of the random variable $\tilde
{c}_1^{(p)}$ equals~0.

Introduce the following random element in the space of collections of
five matrices:
\begin{equation}
\label{RM} W_i = \bigl(a_i^0
\tilde{a}_i^{\top},a_i^0
\tilde{b}_i^{\top},\tilde{a}_i\tilde
{a}_i^{\top}-\sigma^2\mathrm{I}_n,
\tilde{a}_i\tilde{b}_i^{\top},\tilde
{b}_i\tilde{b}_i^{\top}-\sigma^2
\mathrm{I}_d\bigr).
\end{equation}

The next statement on the asymptotic normality of the estimator follows
from the proof of Theorem 8(b) in \cite{kutsa}, where, instead of
condition \eqref{vi}, there was a stronger assumption that $\tilde
{c}_1$ is symmetrically distributed, but the proof of Theorem 8(b) in
\cite{kutsa} still works under the weaker condition \eqref{vi}\xch{.}{).}
\begin{thm}\label{th:4}
Assume conditions \eqref{i} and \eqref{iii}--\eqref{vi}. Then:
\renewcommand{\theenumi}{(\alph{enumi})}
\renewcommand{\labelenumi}{\rm(\alph{enumi})}
\begin{enumerate}
\item\label{4.a}\ \vspace*{-18pt}
\begin{equation}
\label{CW} \displaystyle\frac{1}{\sqrt{m}}\sum_{i=1}^m
W_i\stackrel{\text{\rm d}} {\longrightarrow} \varGamma= (
\varGamma_1,\dots,\varGamma_5)\quad \text{as }~ m\to\infty,
\end{equation}
where $\varGamma$ is a Gaussian centered random element with matrix components,
\item\label{4.b}\ \vspace*{-26pt}
\begin{align}
\label{AN} \sqrt{m}\bigl(\hat{X} - X^0\bigr)&\convdistr
V_A^{-1}\varGamma\bigl(X^0\bigr)\quad \text{as }~ m
\to\infty,\\
\varGamma(X)&:=\varGamma_1 X - \varGamma_2 +
\varGamma_3 X - \varGamma_4\notag\\
\label{AN1} &\quad-X\bigl(\mathrm{I}_d + X^{\top}X
\bigr)^{-1}\bigl(X^{\top}\varGamma_3 X -
X^{\top}\varGamma _4-\varGamma_4^{\top}X +
\varGamma_5\bigr),
\end{align}
where $V_A$ is from condition \eqref{iii}, and $\varGamma_i$ is from
condition \eqref{CW}.
\end{enumerate}
\end{thm}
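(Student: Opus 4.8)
The plan is to obtain part~(a) from Lyapunov's central limit theorem for sums of independent, non-identically distributed summands, and part~(b) by linearizing the estimating equation of Lemma~\ref{l:3} around $X^0$ and feeding in part~(a). For part~(a), I would first note that the $W_i$ are independent (the $\tilde c_i$ are i.i.d.) and centered, using $\M\tilde a_i=\M\tilde b_i=0$ from \eqref{i} and $\M\tilde a_i\tilde a_i^{\top}=\sigma^2\mathrm I_n$, $\M\tilde b_i\tilde b_i^{\top}=\sigma^2\mathrm I_d$ from \eqref{CE}. Identifying a five-tuple of matrices with one Euclidean vector and invoking the Cram\'er--Wold device, it suffices to establish a one-dimensional CLT for the independent centered scalars $\xi_i:=\ell(W_i)$, where $\ell$ is an arbitrary fixed linear functional. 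Two things have to be checked. First, $\frac1m\sum_{i=1}^m\M\xi_i^2$ converges: the contributions of the quadratic coordinates $\tilde a_i\tilde a_i^{\top}-\sigma^2\mathrm I_n$, $\tilde a_i\tilde b_i^{\top}$, $\tilde b_i\tilde b_i^{\top}-\sigma^2\mathrm I_d$ are constant in $i$ (identical distribution, finite fourth moment by \eqref{iv}); those of $a_i^0\tilde a_i^{\top}$ and $a_i^0\tilde b_i^{\top}$ converge by \eqref{iii} together with $\cov(\tilde c_1)=\sigma^2\mathrm I_{n+d}$; and the mixed linear/quadratic covariances vanish by the third-moment condition \eqref{vi}. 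Second, Lyapunov's condition with exponent $2+\delta$: from $\M|\xi_i|^{2+\delta}\le\mathit{const}\,(\|a_i^0\|^{2+\delta}+1)$ (the constant depending on $\ell$ and on $\M\|\tilde c_1\|^{4+2\delta}<\infty$ from \eqref{iv}) one gets
\[
\frac{1}{m^{1+\delta/2}}\sum_{i=1}^m\M|\xi_i|^{2+\delta}\le\mathit{const}\,\frac{1}{m^{1+\delta/2}}\sum_{i=1}^m\bigl\|a_i^0\bigr\|^{2+\delta}+\frac{\mathit{const}}{m^{\delta/2}}\longrightarrow0
\]
by \eqref{v}. This yields \eqref{CW} with a centered Gaussian limit $\varGamma$.

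For part~(b), by Theorem~\ref{th:2} and Lemma~\ref{l:3}, with probability one $\hat X$ is, for all $m\ge m_0(\omega)$, finite, eventually inside a fixed neighbourhood $U$ of $X^0$, and a root of $S_m(X):=\sum_{i=1}^m s(a_i,b_i;X)$. Since $\mathrm I_d+X^{\top}X$ is invertible on $U$, a first-order Taylor expansion gives $0=S_m(X^0)+DS_m(X^0)[\hat X-X^0]+R_m$ with $\|R_m\|\le\tfrac12\sup_{X\in U}\|D^2S_m(X)\|\,\|\hat X-X^0\|^2$. Then I would argue in three steps. (1)~Substituting $a_i=a_i^0+\tilde a_i$, $b_i=b_i^0+\tilde b_i$ into \eqref{EF} and using $X^{0\top}a_i^0=b_i^0$, a direct computation shows that the deterministic $\sigma^2\mathrm I$ terms cancel and that $s(a_i,b_i;X^0)$ equals precisely the linear combination of the five coordinates of $W_i$ obtained from the definition of $\varGamma(X)$ in \eqref{AN1} by replacing each $\varGamma_k$ with the corresponding coordinate of $W_i$; hence, by part~(a) and continuity of the linear map $\varGamma\mapsto\varGamma(X^0)$, one gets $\frac1{\sqrt m}S_m(X^0)\convdistr\varGamma(X^0)$. (2)~Differentiating \eqref{EF} in $X$, taking expectations, and again using the cancellation of the $\sigma^2\mathrm I$ terms, the strong law of large numbers for the i.i.d. error terms (finite fourth moments by \eqref{iv}), condition \eqref{iii}, and the consistency $\hat X\probone X^0$ give $\tfrac1m DS_m(X^0)\probone\mathcal A$, where $\mathcal A$ is the linear operator $H\mapsto V_AH$ on $\mathbb R^{n\times d}$, which is invertible since $V_A$ is nonsingular. (3)~The same moment bounds give $\tfrac1m\sup_{X\in U}\|D^2S_m(X)\|=O_p(1)$; combined with the Taylor identity, this first bootstraps $\sqrt m\,\|\hat X-X^0\|=O_p(1)$ and then $\tfrac1{\sqrt m}R_m=o_p(1)$. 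Solving for $\hat X-X^0$,
\[
\sqrt m\bigl(\hat X-X^0\bigr)=-\Bigl(\tfrac1m DS_m\bigl(X^0\bigr)\Bigr)^{-1}\tfrac1{\sqrt m}S_m\bigl(X^0\bigr)+o_p(1)\convdistr -V_A^{-1}\varGamma\bigl(X^0\bigr),
\]
and, since $\varGamma(X^0)$ is a linear image of the centered Gaussian $\varGamma$ and hence symmetric, $-V_A^{-1}\varGamma(X^0)\distr V_A^{-1}\varGamma(X^0)$, which is \eqref{AN}.

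I expect the main difficulty to lie in step~(2): one has to differentiate the rational estimating function \eqref{EF}, compute the expectations, and verify that every $\sigma^2$-dependent piece cancels, so that the normalized Jacobian has the \emph{nonsingular} limit $H\mapsto V_AH$ rather than, say, $H\mapsto(V_A+\sigma^2\mathrm I_n)H$; this, together with the uniform stochastic control of $D^2S_m$ on a neighbourhood of $X^0$ that renders the Taylor remainder negligible, is precisely what is inherited from the proof of Theorem~8(b) in \cite{kutsa}.
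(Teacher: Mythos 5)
Your proposal is correct and takes essentially the same route as the paper, which defers the proof to Theorem~8(b) of \cite{kutsa} but reproduces its key ingredients: the Lyapunov CLT of Lemma~\ref{l:20} for the independent centered elements $W_i$ (your part~(a)), the expansion \eqref{EFE} of $s(a_i,b_i;X^0)$ as the $W_i$-analogue of $\varGamma(X^0)$ with the $\sigma^2 X^0$ terms cancelling (your step~(1)), and the asymptotic expansion of Lemma~\ref{l:7} with the nonsingular limit $V_A$ (your steps~(2)--(3)). Your handling of the sign via the symmetry of the centered Gaussian $\varGamma$, and your observation that condition \eqref{vi} is precisely what decorrelates the linear and quadratic blocks of $W_i$ so that the limiting covariance exists, match the points the paper itself emphasizes.
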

\begin{rem}
Under the assumptions of Theorem \ref{th:4}, the components of random
element \eqref{RM} are uncorrelated, and therefore, the components of
the limit element $\varGamma$ are uncorrelated as well.
\end{rem}

Let $f\in\mathbb{R}^{n\times1}$. Under the conditions of Theorem \ref
{th:4}, the convergence \eqref{AN} implies that
\begin{align}
&\sqrt{m}\bigl(\hat{X} - X^0\bigr)^{\top}f \convdistr N
\bigl(0,S\bigl(X^0,f\bigr)\bigr),\label{ANF}
\\
&\quad S\bigl(X^0,f\bigr) = \M\varGamma^{\top}(X_0)V_A^{-1}ff^{\top}V_A^{-1}
\varGamma (X_0).\label{ACM}
\end{align}

Let a consistent estimator $\hat{f} = \hat{f}_m$ of the vector $f$ be
given. We want to construct a consistent estimator of matrix \eqref
{ACM}. The matrix $S(X^0,f)$ is expressed, for example, via the fourth
moments of errors $\tilde{c}_i$, and those moments cannot be
consistently estimated without additional assumptions on the error
probability distribution. Therefore, an explicit expression for the
latter matrix does not help to construct the desirable estimator.
Nevertheless, we can construct something like the sandwich estimator
\cite[pp.~368--369]{car}.

The next statement on the consistency of the nuisance parameter
estimators follows from the proof of Lemma 10 in \cite{kutsa}. Recall
that the bar means averaging over the observations; see Section~\ref{s:1}.
\begin{lemma}\label{l:6}
Assume the conditions of Theorem \emph{\ref{th:4}}. Define the estimators:
\begin{align}
\label{ES} \hat{\sigma}^2 &= \frac{1}{d}\mathrm{tr} \bigl[
\bigl(\overline{bb^{\top}}- 2\hat{X}^{\top}\overline{ab^{\top}}+
\hat{X}^{\top}\overline{aa^{\top}}\hat{X}\bigr) \bigl(
\mathrm{I}_d+\hat{X}^{\top}\hat{X}\bigr)^{-1} \bigr],\\
\label{EV} \hat{V}_A &= \overline{aa^{\top}} - \hat{
\sigma}^2\mathrm{I}_n.
\end{align}
Then
\begin{equation}
\hat{\sigma}^2\prob\sigma^2, \qquad \hat{V}_A
\prob V_A.
\end{equation}
\end{lemma}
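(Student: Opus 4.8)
The plan is to prove Lemma~\ref{l:6} by reducing both claims to laws of large numbers for the averaged quantities, using the strong consistency of $\hat{X}$ from Theorem~\ref{th:2} together with the moment condition~\eqref{ii} (which is implied by~\eqref{iv}). First I would treat $\hat{V}_A$, which is the easier of the two. Writing $\overline{aa^{\top}} = \overline{a^0 a^{0\top}} + \overline{a^0\tilde{a}^{\top}} + \overline{\tilde{a}a^{0\top}} + \overline{\tilde{a}\tilde{a}^{\top}}$, condition~\eqref{iii} gives $\overline{a^0 a^{0\top}}\to V_A$, the cross terms are $o_p(1)$ (their entries have mean zero and variances of order $m^{-2}\sum_i\|a_i^0\|^2 = O(m^{-1})$ by~\eqref{iii}), and $\overline{\tilde{a}\tilde{a}^{\top}}\probone \sigma^2\mathrm{I}_n$ by the strong law of large numbers applied to the i.i.d.\ matrices $\tilde{a}_i\tilde{a}_i^{\top}$, whose entries are integrable by~\eqref{ii}. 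Hence $\overline{aa^{\top}}\prob V_A + \sigma^2\mathrm{I}_n$, and combining this with $\hat{\sigma}^2\prob\sigma^2$ yields $\hat{V}_A\prob V_A$.

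The core of the argument is therefore $\hat{\sigma}^2\prob\sigma^2$. I would first establish, by exactly the decomposition used above, that
\[
\overline{aa^{\top}}\prob V_A + \sigma^2\mathrm{I}_n,\qquad
\overline{ab^{\top}}\prob V_A X^0,\qquad
\overline{bb^{\top}}\prob X^{0\top}V_A X^0 + \sigma^2\mathrm{I}_d,
\]
where the limits for $\overline{ab^{\top}}$ and $\overline{bb^{\top}}$ use the model relation $b_i^0 = X^{0\top}a_i^0$ from~\eqref{MLM1}, the independence and zero-mean and equal-variance structure of the errors in~\eqref{CE} (in particular $\M\tilde{a}_1\tilde{b}_1^{\top}=0$, which makes the error cross term in $\overline{ab^{\top}}$ vanish in the limit), and again the SLLN plus condition~\eqref{iii} for the signal parts and condition~\eqref{ii} for integrability. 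Since $\hat{X}\probone X^0$ by Theorem~\ref{th:2} and $X\mapsto(\mathrm{I}_d + X^{\top}X)^{-1}$ and the matrix trace are continuous, the continuous mapping theorem applied to~\eqref{ES} gives
\[
\hat{\sigma}^2\prob \frac{1}{d}\mathrm{tr}\bigl[\bigl(X^{0\top}V_A X^0 + \sigma^2\mathrm{I}_d - 2X^{0\top}V_A X^0 + X^{0\top}(V_A+\sigma^2\mathrm{I}_n)X^0\bigr)\bigl(\mathrm{I}_d + X^{0\top}X^0\bigr)^{-1}\bigr].
\]
The matrix inside simplifies to $\sigma^2\mathrm{I}_d + \sigma^2 X^{0\top}X^0 = \sigma^2(\mathrm{I}_d + X^{0\top}X^0)$, so the bracket is $\sigma^2\mathrm{I}_d$ and the trace over $d$ returns $\sigma^2$.

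The main obstacle, such as it is, lies not in any single estimate but in bookkeeping: one must carefully separate signal terms (handled by~\eqref{iii} and Cesàro-type arguments), error--signal cross terms (shown negligible via Chebyshev, using that the errors have zero mean and finite variance and that $m^{-1}\sum_i\|a_i^0\|^2$ is bounded), and pure-error terms (handled by the strong law of large numbers, which requires the integrability furnished by~\eqref{ii}), and then verify that after substituting $\hat{X}$ for $X^0$ the algebraic cancellation producing $\sigma^2(\mathrm{I}_d+X^{0\top}X^0)$ actually occurs. Since all of this is precisely the content of the proof of Lemma~10 in~\cite{kutsa}, the statement follows; we omit the repetition of those details.
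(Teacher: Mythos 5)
Your proof is correct, and it is essentially the same argument the paper relies on: the paper gives no independent proof of Lemma~\ref{l:6} but simply defers to the proof of Lemma~10 in \cite{kutsa}, which proceeds by exactly this route (law-of-large-numbers limits for $\overline{aa^{\top}}$, $\overline{ab^{\top}}$, $\overline{bb^{\top}}$, consistency of $\hat{X}$, and the algebraic cancellation yielding $\sigma^2(\mathrm{I}_d+X^{0\top}X^0)$). Your decomposition into signal, cross, and pure-error terms and the final trace computation are all sound.
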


The next asymptotic expansion of the TLS estimator is presented in \cite
{kutsa}, formulas (4.10) and (4.11).
\begin{lemma}\label{l:7}
Under the conditions of Theorem \emph{\ref{th:4}}, we have:
\begin{equation}
\label{AEX} \sqrt{m}\bigl(\hat{X} - X^0\bigr) = -
V_A^{-1}\cdot\frac{1}{\sqrt{m}}\sum
_{i=1}^{m}s\bigl(a_i,b_i;X^0
\bigr) + o_p(1).
\end{equation}
\end{lemma}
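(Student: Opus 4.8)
The plan is to obtain \eqref{AEX} from the estimating equation of Lemma~\ref{l:3} by a first-order Taylor expansion of the matrix-valued map $X\mapsto s(a_i,b_i;X)$ about $X^0$, controlling the remainder by the strong consistency $\hat X\probone X^0$ of Theorem~\ref{th:2}. Since $\det(\mathrm{I}_d+X^\top X)\ge1$ for every $X$, the function $s(a,b;\cdot)$ is smooth, and for all $m\ge m_0(\omega)$ we may write, with the integral form of the remainder,
\[
0=\sum_{i=1}^m s\bigl(a_i,b_i;\hat X\bigr)=\sum_{i=1}^m s\bigl(a_i,b_i;X^0\bigr)+\mathcal H_m\bigl(\hat X-X^0\bigr),
\]
where $\mathcal H_m:=\int_0^1\sum_{i=1}^m D_X s\bigl(a_i,b_i;X^0+t(\hat X-X^0)\bigr)\,dt$ is a random linear operator on $\mathbb R^{n\times d}$. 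Dividing by $\sqrt m$ and solving gives $\sqrt m(\hat X-X^0)=-(\tfrac1m\mathcal H_m)^{-1}\cdot\tfrac1{\sqrt m}\sum_{i=1}^m s(a_i,b_i;X^0)$; hence it suffices to prove (a) $\tfrac1{\sqrt m}\sum_{i=1}^m s(a_i,b_i;X^0)=O_p(1)$ and (b) $\tfrac1m\mathcal H_m\prob(H\mapsto V_AH)$, which is an invertible operator on $\mathbb R^{n\times d}$ because $V_A$ is nonsingular. Granting (a) and (b), one replaces $(\tfrac1m\mathcal H_m)^{-1}$ by the operator $G\mapsto V_A^{-1}G$ up to an $o_p(1)$ term, which is precisely \eqref{AEX}.

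Claim (a) is the easy one. Expanding \eqref{EF} at $X=X^0$ and using the model relation $a_i^{0\top}X^0=b_i^{0\top}$ together with $\cov(\tilde c_1)=\sigma^2\mathrm{I}_{n+d}$, one checks that $s(a_i,b_i;X^0)$ coincides with the expression \eqref{AN1} defining $\varGamma(X^0)$, with each $\varGamma_j$ replaced by the corresponding component of the random element $W_i$ from \eqref{RM}. Hence $\tfrac1{\sqrt m}\sum_{i=1}^m s(a_i,b_i;X^0)$ is a fixed continuous linear image of $\tfrac1{\sqrt m}\sum_{i=1}^m W_i$, which converges in distribution by Theorem~\ref{th:4}\ref{4.a} and is in particular $O_p(1)$.

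For claim (b) I would first compute $\M D_X s(a_i,b_i;X^0)$: differentiating \eqref{EF} in a direction $H$, taking expectations, and repeatedly using $a_i^{0\top}X^0=b_i^{0\top}$, $\M\tilde a_i=0$, $\M\tilde a_i\tilde a_i^\top=\sigma^2\mathrm{I}_n$, $\M\tilde a_i\tilde b_i^\top=0$, $\M\tilde b_i\tilde b_i^\top=\sigma^2\mathrm{I}_d$ and the identity $X^0(\mathrm{I}_d+X^{0\top}X^0)^{-1}(\mathrm{I}_d+X^{0\top}X^0)=X^0$, all the error-generated contributions cancel and one is left with $\M D_X s(a_i,b_i;X^0)[H]=a_i^0a_i^{0\top}H$; averaging and invoking condition \eqref{iii} gives $\tfrac1m\sum_{i=1}^m\M D_X s(a_i,b_i;X^0)\to(H\mapsto V_AH)$. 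This must then be upgraded to convergence in probability of the \emph{random} operator $\tfrac1m\mathcal H_m$, which is evaluated along the random segment $X^0+t(\hat X-X^0)$. Because $s(a,b;\cdot)$ and its first two derivatives are polynomials in the entries of $a$ and $b$ divided by powers of $\det(\mathrm{I}_d+X^\top X)\ge1$, the map $X\mapsto D_X s(a_i,b_i;X)$ is Lipschitz on any fixed ball with a constant dominated by a polynomial in $\|a_i\|$ and $\|b_i\|$; by condition \eqref{ii} this yields a local uniform law of large numbers, so $\tfrac1m\sum_{i=1}^m D_X s(a_i,b_i;X_m)\prob(H\mapsto V_AH)$ whenever $X_m\prob X^0$. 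Applying this with $X_m=X^0+t(\hat X-X^0)$, which tends to $X^0$ a.s.\ by Theorem~\ref{th:2}, and integrating over $t\in[0,1]$ gives (b). The main obstacle is exactly this step: passing from convergence of the averaged Jacobian at the fixed point $X^0$ to its convergence at the random, data-dependent argument $\hat X$, which forces the local uniform LLN and is where the moment hypotheses and the lower bound $\det(\mathrm{I}_d+X^\top X)\ge1$ are genuinely used; the mean computation is routine though lengthy, and the inversion and final $o_p(1)$ bookkeeping are then immediate. (This reproduces the argument behind formulas (4.10)--(4.11) of \cite{kutsa}.)
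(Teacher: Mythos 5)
The paper does not actually prove this lemma; it simply cites formulas (4.10)--(4.11) of \cite{kutsa}. Your reconstruction --- solving the estimating equation of Lemma~\ref{l:3} by a first-order Taylor expansion about $X^0$, checking that the normalized score at $X^0$ is a fixed linear image of $\frac{1}{\sqrt m}\sum_i W_i$ (the $\sigma^2$ terms indeed cancel against $-X^0(\mathrm{I}_d+X^{0\top}X^0)^{-1}\sigma^2(\mathrm{I}_d+X^{0\top}X^0)$, recovering exactly \eqref{EFE}), and showing via the mean computation $\M D_Xs(a_i,b_i;X^0)[H]=a_i^0a_i^{0\top}H$ plus a local uniform law of large numbers that the averaged Jacobian converges to the invertible operator $H\mapsto V_AH$ --- is precisely the standard M-estimation argument underlying those formulas, and it is sound.
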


In view of Lemma \ref{l:7}, introduce the sandwich estimator $\hat
{S}(\hat{f})$ of the matrix (\ref{ACM}):
\begin{equation}
\label{EACM} \hat{S}(\hat{f}) = \frac{1}{m}\sum
_{i=1}^{m}s^{\top}(a_i,b_i;
\hat {X})~\hat{V}_A^{-1}\hat{f}\hat{f}^{\top}
\hat{V}_A^{-1}~s(a_i,b_i;\hat{X}),
\end{equation}
where the estimator $\hat{V}_A$ is given in \eqref{EV}.
\begin{thm}\label{th:8}
Let $f\in\mathbb{R}^{n\times1}$, and let $\hat{f}$ be a consistent
estimator of this vector. Under the conditions of Theorem \emph{\ref{th:4}},
the statistic $\hat{S}(\hat{f})$ is a consistent estimator of the
matrix $S(X^0,f)$, that is, $\hat{S}(\hat{f})\prob S(X^0,f)$.
\end{thm}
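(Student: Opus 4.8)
The plan is to show that $\hat S(\hat f)$ converges to $S(X^0,f)$ by rewriting $\hat S(\hat f)$ as the average $\frac1m\sum_{i=1}^m s^\top(a_i,b_i;\hat X)\,\hat V_A^{-1}\hat f\hat f^\top\hat V_A^{-1}\,s(a_i,b_i;\hat X)$, and comparing it with the ``idealized'' average $\frac1m\sum_{i=1}^m s^\top(a_i,b_i;X^0)\,V_A^{-1}ff^\top V_A^{-1}\,s(a_i,b_i;X^0)$, which by the strong law of large numbers (using finiteness of the relevant fourth moments from condition~\eqref{iv}, together with boundedness coming from condition~\eqref{v}) converges to $\M s^\top(a_1,b_1;X^0)V_A^{-1}ff^\top V_A^{-1}s(a_1,b_1;X^0)$. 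The first point to establish is that this last expectation equals $S(X^0,f)$ as defined in \eqref{ACM}; this follows from Lemma~\ref{l:7}, since the asymptotic expansion \eqref{AEX} identifies $V_A^{-1}\varGamma(X^0)$ in \eqref{AN} with the limit in distribution of $-V_A^{-1}m^{-1/2}\sum_i s(a_i,b_i;X^0)$, and the covariance of the latter is precisely $\M s^\top(a_1,b_1;X^0)V_A^{-1}ff^\top V_A^{-1}s(a_1,b_1;X^0)$ by the i.i.d.\ structure; one also needs that $a_i^0$-dependent terms average correctly, which is where condition~\eqref{iii} enters.

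Next I would handle the perturbation from plugging in the estimators. Write $\hat S(\hat f)-\frac1m\sum_i s^\top(a_i,b_i;X^0)V_A^{-1}ff^\top V_A^{-1}s(a_i,b_i;X^0)$ as a telescoping sum of three kinds of differences: replacing $\hat f\hat f^\top$ by $ff^\top$, replacing $\hat V_A^{-1}$ by $V_A^{-1}$, and replacing $s(a_i,b_i;\hat X)$ by $s(a_i,b_i;X^0)$. For the first two, $\hat f\prob f$ by hypothesis and $\hat V_A\prob V_A$ by Lemma~\ref{l:6} (with $\hat V_A$ nonsingular with probability tending to one since $V_A$ is nonsingular by \eqref{iii}), so it suffices to show that $\frac1m\sum_i s^\top(a_i,b_i;X^0)\otimes s(a_i,b_i;X^0)$ is $O_p(1)$ — indeed it converges by the SLLN — and then these two substitutions contribute $o_p(1)$ by a uniform-boundedness argument. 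The third substitution is the most delicate: I would Taylor-expand $s(a,b;X)$ in $X$ around $X^0$, noting that $s$ is a smooth (rational, with denominator $\det(\mathrm I_d+X^\top X)\ge1$) matrix-valued function of $X$, so that $s(a_i,b_i;\hat X)-s(a_i,b_i;X^0)$ is bounded by $\|\hat X-X^0\|$ times a polynomial of bounded degree in $\|a_i\|,\|b_i\|$ on the event $\{\|\hat X-X^0\|\le 1\}$, which has probability tending to one by Theorem~\ref{th:2}. Then $\frac1m\sum_i$ of that polynomial is $O_p(1)$ by the moment conditions \eqref{iv}--\eqref{v} together with $X^{0\top}a_i^0=b_i^0$, and $\|\hat X-X^0\|=o_p(1)$, so the whole difference is $o_p(1)$.

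The main obstacle I anticipate is the bookkeeping in the third substitution: one must control the products $s^\top(a_i,b_i;\hat X)M\,s(a_i,b_i;\hat X)$ versus $s^\top(a_i,b_i;X^0)M\,s(a_i,b_i;X^0)$ for $M$ a fixed matrix, where $s$ is quartic in the data (it contains terms like $a(a^\top X-b^\top)$ and the $X(\mathrm I_d+X^\top X)^{-1}$ factor), so the Lipschitz bound in $X$ involves moments up to order eight of $\|\tilde c_i\|$ unless one is careful to combine the $a_i^0$-parts (which are deterministic and controlled by \eqref{v}) separately from the error parts. I would organize this by splitting each $s(a_i,b_i;X^0)$ into a sum of a term linear in $\tilde c_i$ with bounded-in-mean coefficients plus a centered quadratic term in $\tilde c_i$, so that only fourth moments (condition~\eqref{iv}, which gives $4+2\delta$ moments, hence uniform integrability of the fourth powers) are actually needed for the SLLN and for the $O_p(1)$ bounds, and the boundedness of the deterministic coefficients follows from \eqref{iii} and \eqref{v}. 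Once this decomposition is in place, the three substitutions are each a routine application of Slutsky together with the SLLN, and the theorem follows.
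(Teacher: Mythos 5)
Your overall architecture --- compare $\hat S(\hat f)$ with the idealized average $\frac1m\sum_i s^\top(a_i,b_i;X^0)\,V_A^{-1}ff^\top V_A^{-1}\,s(a_i,b_i;X^0)$ and then telescope through the three substitutions --- is the same as the paper's, and your closing idea of splitting $s(a_i,b_i;X^0)$ into a part linear in $\tilde c_i$ with deterministic coefficients plus a centered quadratic part is exactly the paper's expansion \eqref{EFE} via the components $W_{ij}$ of \eqref{RM}. The substitution steps ($\hat f$ for $f$, $\hat V_A$ for $V_A$, $\hat X$ for $X^0$ with a Lipschitz bound on the event $\|\hat X-X^0\|\le1$) are routine and match the paper's part (b).

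The gap is in the law-of-large-numbers step. This is a functional model: the $a_i^0$ are nonrandom and vary with $i$, so the summands $s(a_i,b_i;X^0)$ are independent but \emph{not} identically distributed. Two consequences. First, the limit of the idealized average is not $\M s^\top(a_1,b_1;X^0)V_A^{-1}ff^\top V_A^{-1}s(a_1,b_1;X^0)$ --- that quantity depends on the particular $a_1^0$ --- but $\lim_m\frac1m\sum_i\M s^\top(a_i,b_i;X^0)\cdots s(a_i,b_i;X^0)$, which must be checked directly to equal $S(X^0,f)$ of \eqref{ACM}; your identification ``by the i.i.d.\ structure'' does not apply, and the caveat that the $a_i^0$-terms ``average correctly'' is where the actual work sits. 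Second, and more seriously, ``the SLLN'' is not available for the cross terms whose coefficients are quadratic in $a_i^0$, e.g.\ $\frac1m\sum_i a_i^{0(p)}a_i^{0(q)}\bigl(\tilde a_i^{(j)}\tilde a_i^{(r)}-\M\tilde a_i^{(j)}\tilde a_i^{(r)}\bigr)$: the weights $a_i^{0(p)}a_i^{0(q)}$ are not bounded (conditions \eqref{iii} and \eqref{v} control them only in the averaged senses $\frac1m\sum\|a_i^0\|^2=O(1)$ and $\frac{1}{m^{1+\delta/2}}\sum\|a_i^0\|^{2+\delta}\to0$), and a Chebyshev or Kolmogorov variance bound on these summands would require $\frac{1}{m^2}\sum\|a_i^0\|^4\to0$ and eighth moments of the errors, neither of which is assumed. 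The paper closes this hole with a separate degenerate-convergence lemma (Lemma \ref{l:19}, a truncation criterion from Petrov) for weighted sums $\frac1m\sum d_k\eta_k$ that needs only $\M|\eta_1|^r<\infty$ and $m^{-r}\sum|d_k|^r\to0$ for some $r\in(1,2)$; taking $r=1+\delta/2$, condition \eqref{v} supplies exactly the weight condition. Your sketch needs this lemma (or an equivalent) to become a proof; a blanket appeal to the SLLN with ``uniform integrability of fourth powers'' does not cover non-identically-distributed weighted sums under these moment assumptions.
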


Appendix contains the proof of this theorem and of all further statements.
\section{Construction of goodness-of-fit test}\label{s:3}
For the observation model \eqref{MLM2}, we test the following
hypotheses concerning the response $b$ and the latent variable $a^0$:

$\textbf{H}_0$ There exists such a matrix $X\in\mathbb{R}^{n\times d}$ that
\begin{equation}
\label{H0} \M\bigl(b - X^{\top}a^0\bigr) = 0,\quad \mbox{and}
\end{equation}

$\textbf{H}_1$ For each matrix $X\in\mathbb{R}^{n\times d}$,
\begin{equation}
\label{H1} \M\bigl(b - X^{\top}a^0\bigr) \text{ is not
identically zero.}
\end{equation}

In fact, the null hypothesis means that the observation model
(1.3)--(1.4) holds. Based on observations $a_i$, $b_i$, $i=1, \ldots,
m$, we want to construct a test statistic to check this hypothesis. Let
\begin{equation}
\label{ST} T_m^0 := \frac{1}{m}\sum
_{i=1}^{m}\bigl(b_i -
\hat{X}^{\top}a_i\bigr) = \overline {b -
\hat{X}^{\top}a}.
\end{equation}
\begin{lemma}\label{l:9}
Under the conditions of Theorem \emph{\ref{th:4}},
\begin{equation}
\label{STE} \sqrt{m}T_m^0 = \frac{1}{\sqrt{m}}\sum
_{i=1}^{m}\bigl(\tilde{b}_i -
X^{0\top
}\tilde{a}_i\bigr) - \sqrt{m}\bigl(\hat{X} -
X^0\bigr)^{\top}\overline{a^0} +
o_p(1).
\end{equation}
\end{lemma}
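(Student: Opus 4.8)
The starting point is the definition \eqref{ST}, which gives
\[
\sqrt{m}\,T_m^0 = \frac{1}{\sqrt{m}}\sum_{i=1}^{m}\bigl(b_i - \hat{X}^{\top}a_i\bigr).
\]
I would substitute the observation model \eqref{MLM1}--\eqref{MLM2}, writing $b_i = X^{0\top}a_i^0 + \tilde{b}_i$ and $a_i = a_i^0 + \tilde{a}_i$, so that
\[
b_i - \hat{X}^{\top}a_i = \tilde{b}_i - X^{0\top}\tilde{a}_i - \bigl(\hat{X} - X^0\bigr)^{\top}a_i^0 - \bigl(\hat{X} - X^0\bigr)^{\top}\tilde{a}_i.
\]
Summing over $i$, dividing by $\sqrt{m}$, and recalling that $\overline{a^0} = m^{-1}\sum_{i=1}^m a_i^0$, the first two groups of terms produce exactly $\frac{1}{\sqrt{m}}\sum_{i=1}^m(\tilde{b}_i - X^{0\top}\tilde{a}_i)$ and $-\sqrt{m}(\hat{X}-X^0)^{\top}\overline{a^0}$, which are the two leading terms claimed in \eqref{STE}. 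So the entire content of the lemma is to show that the remainder
\[
R_m := \frac{1}{\sqrt{m}}\bigl(\hat{X} - X^0\bigr)^{\top}\sum_{i=1}^{m}\tilde{a}_i = \sqrt{m}\,\bigl(\hat{X} - X^0\bigr)^{\top}\,\overline{\tilde{a}}
\]
is $o_p(1)$.

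\textbf{Controlling the remainder.} For $R_m$ I would split the two factors. By Theorem~\ref{th:4}(b) (or already Lemma~\ref{l:7}), $\sqrt{m}(\hat{X}-X^0) = O_p(1)$. On the other hand, $\overline{\tilde{a}} = m^{-1}\sum_{i=1}^m \tilde{a}_i$ is an average of i.i.d.\ centered vectors with finite variance by condition \eqref{i}, so by the law of large numbers $\overline{\tilde{a}} \probone 0$, in particular $\overline{\tilde{a}} = o_p(1)$. Writing $R_m = \bigl(\sqrt{m}(\hat{X}-X^0)\bigr)^{\top}\overline{\tilde{a}}$ as a product of an $O_p(1)$ matrix and an $o_p(1)$ vector gives $R_m = o_p(1)$, which completes the proof. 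Since the claimed identity \eqref{STE} is exact apart from this remainder, no further estimates are needed.

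\textbf{Main obstacle.} There is no serious analytic difficulty here; the only point requiring a little care is the bookkeeping of the cross terms in the expansion of $b_i - \hat{X}^{\top}a_i$ — one must check that exactly the term $(\hat{X}-X^0)^{\top}\tilde{a}_i$ is the one absorbed into $o_p(1)$ while $(\hat{X}-X^0)^{\top}a_i^0$ is kept as a genuine leading term, the asymmetry being that $\overline{a^0}$ need only be $O(1)$ whereas $\overline{\tilde{a}}$ is $o_p(1)$. One should also note that, strictly speaking, $\hat{X}$ is finite only for $m \ge m_0(\omega)$ by Theorem~\ref{th:2}, so all the displayed identities are understood on that event, whose complement has probability tending to zero; this does not affect any of the $o_p(1)$ statements.
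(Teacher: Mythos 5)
Your proposal is correct and follows essentially the same route as the paper: the identical algebraic decomposition of $b_i-\hat{X}^{\top}a_i$, with the same remainder $-(\hat{X}-X^0)^{\top}\frac{1}{\sqrt{m}}\sum_{i=1}^m\tilde{a}_i$. The only (immaterial) difference is how the product is split: the paper writes it as $o_p(1)\cdot O_p(1)$ using consistency of $\hat{X}$ and the CLT for $\frac{1}{\sqrt{m}}\sum_i\tilde{a}_i$, while you write it as $O_p(1)\cdot o_p(1)$ using $\sqrt{m}$-consistency from Theorem \ref{th:4}\ref{4.b} and the LLN for $\overline{\tilde{a}}$; both are valid under the stated hypotheses.
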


We need the following stabilization condition on the latent variable:
\begin{enumerate}\addtocounter{enumi}{6}
\item\label{vii}
$\displaystyle\frac{1}{m}\sum_{i=1}^{m}a_i^0\to\mu_a$ as $m\to\infty$
with $\mu_a\in\mathbb{R}^{n\times1}$.
\end{enumerate}
\begin{lemma}\label{l:10}
Assume conditions \eqref{i} and \eqref{iii}--\eqref{vii}. Then
\begin{align}
&\sqrt{m}T_m^0\convdistr N(0,\varSigma_T),\notag\\
&\quad\label{ACT} \varSigma_T = \sigma^2\bigl(1 - 2
\mu_a^{\top}V_A^{-1}\mu_a
\bigr) \bigl(\mathrm{I}_d + X^{0\top}X^0\bigr) + S
\bigl(X^0, \mu_a\bigr).
\end{align}
\end{lemma}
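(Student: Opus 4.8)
The plan is to start from the asymptotic representation \eqref{STE} in Lemma \ref{l:9} and combine it with the asymptotic expansion \eqref{AEX} of Lemma \ref{l:7}. First I would substitute \eqref{AEX} into \eqref{STE} to obtain
\[
\sqrt{m}\,T_m^0 = \frac{1}{\sqrt{m}}\sum_{i=1}^{m}\bigl(\tilde{b}_i - X^{0\top}\tilde{a}_i\bigr) + \Bigl(V_A^{-1}\frac{1}{\sqrt{m}}\sum_{i=1}^{m}s\bigl(a_i,b_i;X^0\bigr)\Bigr)^{\!\top}\overline{a^0} + o_p(1),
\]
and then replace $\overline{a^0}$ by its limit $\mu_a$ using condition \eqref{vii} (the difference contributes $o_p(1)$ since the factor in front is $O_p(1)$ by \eqref{CW} applied through the definition of $s$). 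This writes $\sqrt{m}\,T_m^0$ as $m^{-1/2}\sum_{i=1}^{m}\xi_i + o_p(1)$, where $\xi_i := \bigl(\tilde{b}_i - X^{0\top}\tilde{a}_i\bigr) + \bigl(V_A^{-1}s(a_i,b_i;X^0)\bigr)^{\top}\mu_a$ are, up to the asymptotically negligible dependence of $s$ on the varying $a_i^0$, row-wise independent centered terms.

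Next I would apply a Lindeberg-type central limit theorem to $m^{-1/2}\sum_{i=1}^m \xi_i$. The Lindeberg condition follows from the moment assumptions \eqref{iv}--\eqref{v}: $s(a_i,b_i;X^0)$ is affine in $a_i^0$ with i.i.d.\ error-driven coefficients of finite $(2+\delta)$-moment, so that $\frac{1}{m^{1+\delta/2}}\sum_{i=1}^m \M\|\xi_i\|^{2+\delta}\to 0$ by \eqref{v}, exactly as in the proof of Theorem \ref{th:4}\eqref{4.a}. This gives asymptotic normality with zero mean, and it remains to identify the limiting covariance $\varSigma_T$.

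The covariance computation is the main bookkeeping step. Writing $u_i := \tilde{b}_i - X^{0\top}\tilde{a}_i$ and $v_i := \bigl(V_A^{-1}s(a_i,b_i;X^0)\bigr)^{\top}\mu_a$, the limit is $\varSigma_T = \lim_m \frac{1}{m}\sum_{i=1}^m \bigl(\cov u_i + \cov v_i + \M u_i v_i^\top + \M v_i u_i^\top\bigr)$. For the first term, $\cov u_i = \M\bigl(\tilde{b}_i - X^{0\top}\tilde{a}_i\bigr)\bigl(\tilde{b}_i - X^{0\top}\tilde{a}_i\bigr)^\top = \sigma^2\bigl(\mathrm{I}_d + X^{0\top}X^0\bigr)$ by \eqref{CE}. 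For the third term, one evaluates $s(a_i,b_i;X^0)$ at the true parameter, where $a_i^\top X^0 - b_i^\top = -\bigl(\tilde b_i^\top - \tilde a_i^\top X^0\bigr)$ and the third-moment terms drop out by \eqref{vi}, leaving (after taking the limit and using \eqref{iii}) a contribution of $-2\sigma^2\bigl(\mu_a^\top V_A^{-1}\mu_a\bigr)\bigl(\mathrm{I}_d + X^{0\top}X^0\bigr)$ from the cross terms $\M u_i v_i^\top + \M v_i u_i^\top$ combined. For the second term, by the definition \eqref{ACM} of $S(X^0,f)$ with $f = \mu_a$ and the representation of $V_A^{-1}\varGamma(X^0)$ as the limit of $-V_A^{-1}m^{-1/2}\sum s(a_i,b_i;X^0)$ (Lemma \ref{l:7} together with Theorem \ref{th:4}), one gets $\lim_m \frac1m\sum_i \cov v_i = \M\varGamma^\top(X^0)V_A^{-1}\mu_a\mu_a^\top V_A^{-1}\varGamma(X^0) = S(X^0,\mu_a)$. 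Adding these three contributions yields \eqref{ACT}. The hard part will be carrying out the cross-term computation cleanly: one must expand $s(a_i,b_i;X^0)$ via \eqref{EF}, keep track of which error products survive $\M(\,\cdot\,)$, use \eqref{vi} to kill the odd-order terms, and verify that the $a_i^0$-dependent pieces average (via \eqref{iii} and \eqref{vii}) to the stated $\mu_a^\top V_A^{-1}\mu_a$ coefficient rather than to some other quadratic form.
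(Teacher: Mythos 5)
Your proposal is correct and follows essentially the same route as the paper: substitute the expansion \eqref{AEX} into \eqref{STE}, replace $\overline{a^0}$ by $\mu_a$ via \eqref{vii}, apply a Lyapunov-type CLT to the independent centered summands $z_i=\tilde b_i-X^{0\top}\tilde a_i+s^{\top}(a_i,b_i;X^0)V_A^{-1}\mu_a$, and identify $\varSigma_T$ by the same three-part covariance bookkeeping (with \eqref{vi} killing the odd-moment cross terms). The only remaining work is the explicit evaluation of the cross term, which you correctly flag and which the paper carries out to get $M=-a_i^{0\top}V_A^{-1}\mu_a\,\sigma^2(\mathrm{I}_d+X^{0\top}X^0)$.
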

\begin{lemma}\label{l:11}
Assume the conditions of Lemma \emph{\ref{l:10}}. Then:
\renewcommand{\theenumi}{\alph{enumi}}
\renewcommand{\labelenumi}{\rm(\alph{enumi})}
\begin{enumerate}
\item\label{11.a}
A strong consistent estimator of the vector $\mu_a$ from condition
\eqref{vii} is given by the statistic
\[
\hat{\mu}_a:=\bar{a} = \frac{1}{m}\sum
_{i=1}^{m}a_i.
\]
\item\label{11.b}
A consistent estimator of matrix \eqref{ACT} is given by the matrix statistic
\begin{equation}
\label{EACS} \hat{\varSigma}_T := \hat{\sigma}^2\bigl(1 -
2\hat{\mu}_a^{\top}\hat {V}_A^{-1}
\hat{\mu}_a\bigr) \bigl(\mathrm{I}_d +
\hat{X}^{\top}\hat{X}\bigr) + \hat {S}(\hat{\mu}_a),
\end{equation}
where $\hat{\sigma}^2$ and $\hat{V}_A$ are presented in \eqref{ES} and
\eqref{EV}, respectively, and $\hat{S}(\hat{\mu}_a)$ is matrix \eqref
{EACM} with $\hat{f} = \hat{\mu}_a$.
\end{enumerate}
\end{lemma}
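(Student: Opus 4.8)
The plan is to prove the two parts of Lemma~\ref{l:11} separately, leaning on the consistency results already established.

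\textbf{Part (a).} The statistic $\hat{\mu}_a=\bar a=m^{-1}\sum_{i=1}^m a_i=m^{-1}\sum_{i=1}^m(a_i^0+\tilde a_i)$. Splitting this into the deterministic part $m^{-1}\sum a_i^0$ and the noise part $\bar{\tilde a}$, condition~\eqref{vii} gives $m^{-1}\sum a_i^0\to\mu_a$, so it remains to show $\bar{\tilde a}\probone 0$. Since the $\tilde a_i$ are the first $n$ coordinates of the i.i.d.\ centered vectors $\tilde c_i$ with (by~\eqref{ii} or~\eqref{iv}) finite fourth moment, the strong law of large numbers applies coordinatewise and yields $\bar{\tilde a}\probone 0$. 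Hence $\hat{\mu}_a\probone\mu_a$.

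\textbf{Part (b).} I would establish convergence in probability of each of the three factors in \eqref{EACS} to the corresponding factor in \eqref{ACT}, then combine by the continuous-mapping / product rule for convergence in probability. First, $\hat\sigma^2\prob\sigma^2$ by Lemma~\ref{l:6}. Second, $\hat\mu_a\prob\mu_a$ by part (a) and $\hat V_A\prob V_A$ by Lemma~\ref{l:6}; since $V_A$ is nonsingular, $\hat V_A^{-1}\prob V_A^{-1}$ by continuity of matrix inversion on a neighbourhood of $V_A$, so the scalar factor $1-2\hat\mu_a^{\top}\hat V_A^{-1}\hat\mu_a\prob 1-2\mu_a^{\top}V_A^{-1}\mu_a$. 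Third, $\hat X\probone X^0$ by Theorem~\ref{th:2}, so $\mathrm{I}_d+\hat X^{\top}\hat X\prob\mathrm{I}_d+X^{0\top}X^0$. Fourth, $\hat S(\hat\mu_a)\prob S(X^0,\mu_a)$: here I would invoke Theorem~\ref{th:8} with $f=\mu_a$ and $\hat f=\hat\mu_a$, whose consistency was just shown in part (a); Theorem~\ref{th:8} exactly asserts $\hat S(\hat f)\prob S(X^0,f)$ under the conditions of Theorem~\ref{th:4}, which are in force here. Assembling these four limits gives $\hat\varSigma_T\prob\varSigma_T$.

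\textbf{Main obstacle.} The genuinely non-trivial ingredient is the consistency of the sandwich estimator $\hat S(\hat\mu_a)$, but that work has been packaged into Theorem~\ref{th:8}, whose proof is deferred to the Appendix; once we are allowed to cite it, the only thing to check is that its hypothesis ``$\hat f$ is a consistent estimator of $f$'' is met, which is precisely part~(a) applied with $f=\mu_a$. Everything else is a routine propagation of convergence in probability through continuous operations (matrix inversion near a nonsingular point, matrix products, scalar multiplication), so no delicate estimates are needed beyond verifying that each factor's limit is finite and deterministic, which it is.
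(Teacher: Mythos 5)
Your proposal is correct and follows essentially the same route as the paper: part (a) via the SLLN applied to the error averages together with condition (vii), and part (b) by citing Theorem \ref{th:8} with $f=\mu_a$, $\hat f=\hat\mu_a$ and propagating the consistency of $\hat\sigma^2$, $\hat\mu_a$, $\hat V_A$, and $\hat X$ through the continuous operations in \eqref{EACS}.
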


To ensure the nonsingularity of the matrix $\varSigma_T$, we impose a
final restriction on the observation model:
\begin{enumerate}\addtocounter{enumi}{7}
\item\label{viii}
There exists a finite matrix limit
\[
S_a := \lim_{m\to\infty}\frac{1}{m}\sum
_{i=1}^{m}\bigl(a_i^0 -
\mu_a\bigr) \bigl(a_i^0 - \mu_a
\bigr)^{\top},
\]
\end{enumerate}
and, moreover, the matrix $S_a$ is nonsingular.
\begin{rem}
Assume conditions \eqref{vii} and \eqref{viii}. Then
\[
\frac{1}{m}A^{0\top}A^0 = \frac{1}{m}\sum
_{i=1}^{m}a_i^0a_i^{0\top}
\to V_A = S_a + \mu_a\mu_a^{\top}
\quad \text{as } m\to\infty,
\]
and $V_A$ is nonsingular as a sum of positive definite and positive
semidefinite matrices. Thus, condition \eqref{iii} is a consequence of
assumptions \eqref{vii} and \eqref{viii}.
\end{rem}
\begin{lemma}\label{l:13}
Assume conditions \eqref{i} and \eqref{iv}--\eqref{viii}. Then:
\renewcommand{\theenumi}{\alph{enumi}}
\renewcommand{\labelenumi}{\rm(\alph{enumi})}
\begin{enumerate}
\item\label{13.a}
Matrix \eqref{ACT} is positive definite.
\item\label{13.b}
With probability tending to one as $m\to\infty$, the symmetric matrix
$\hat{\varSigma}_T$ is positive definite as well.
\end{enumerate}
\end{lemma}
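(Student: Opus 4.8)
The plan is to prove part~(a) by showing that the matrix $\varSigma_T$ is a sum of a positive semidefinite term and a positive definite term, and then to derive part~(b) from part~(a) together with the consistency statements already established (Lemma~\ref{l:6}, Theorem~\ref{th:8}, Lemma~\ref{l:11}).

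For part~(a), write $\varSigma_T = \sigma^2(1 - 2\mu_a^{\top}V_A^{-1}\mu_a)(\mathrm{I}_d + X^{0\top}X^0) + S(X^0,\mu_a)$. Since $S(X^0,\mu_a) = \M\varGamma^{\top}(X^0)V_A^{-1}\mu_a\mu_a^{\top}V_A^{-1}\varGamma(X^0)$ is a covariance-type matrix, it is positive semidefinite; also $\mathrm{I}_d + X^{0\top}X^0$ is positive definite. So it suffices to show that the scalar $1 - 2\mu_a^{\top}V_A^{-1}\mu_a$ is \emph{strictly positive}. Here is where condition~\eqref{viii} enters: by the Remark, $V_A = S_a + \mu_a\mu_a^{\top}$ with $S_a$ positive definite, so $V_A - \mu_a\mu_a^{\top} = S_a \succ 0$. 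I would then use the Sherman--Morrison formula for $V_A^{-1} = (S_a + \mu_a\mu_a^{\top})^{-1}$, which gives
\[
\mu_a^{\top}V_A^{-1}\mu_a = \frac{\mu_a^{\top}S_a^{-1}\mu_a}{1 + \mu_a^{\top}S_a^{-1}\mu_a} < 1,
\]
so that $1 - \mu_a^{\top}V_A^{-1}\mu_a > 0$; hence $1 - 2\mu_a^{\top}V_A^{-1}\mu_a$ could still be negative. This is the main obstacle: the naive bound is not enough, and one must look more carefully at the structure of $\varSigma_T$. The resolution is to not split off the scalar but instead regroup: show that $\sigma^2(1 - 2\mu_a^{\top}V_A^{-1}\mu_a)(\mathrm{I}_d + X^{0\top}X^0) + S(X^0,\mu_a)$ equals the covariance matrix of the Gaussian limit $\sqrt{m}\,T_m^0$ in Lemma~\ref{l:10}, and then argue directly that that limiting covariance cannot be singular. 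Concretely, $\sqrt{m}\,T_m^0$ is, up to $o_p(1)$, the sum of $m^{-1/2}\sum(\tilde b_i - X^{0\top}\tilde a_i)$ and $-\sqrt m(\hat X - X^0)^{\top}\mu_a$; using the asymptotic expansion of Lemma~\ref{l:7}, the limit is a linear image of the Gaussian vector $\varGamma$, and one checks that for any nonzero $v \in \mathbb{R}^d$ the linear functional $v^{\top}(\cdot)$ applied to this limit has strictly positive variance, because the "direct" part $v^{\top} m^{-1/2}\sum(\tilde b_i - X^{0\top}\tilde a_i)$ involves the components $\tilde b_i$ with a coefficient ($(\mathrm{I}_d + X^{0\top}X^0)^{1/2}v \ne 0$) that cannot be cancelled by the correction term, since the correction term depends on the errors only through the estimating functions $s(a_i,b_i;X^0)$, whose $\tilde b$-dependence has a different algebraic form. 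Carrying out this variance computation — isolating the part of the limiting variance coming from $\varGamma_5$ and the $\tilde b$-linear parts and showing it stays bounded below — is the technical heart of part~(a).

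For part~(b), I would proceed as follows. By Lemma~\ref{l:11}(b), $\hat\varSigma_T \prob \varSigma_T$, and by part~(a) the limit $\varSigma_T$ is positive definite, hence its smallest eigenvalue $\lambda_{\min}(\varSigma_T) > 0$. Since the map $M \mapsto \lambda_{\min}(M)$ is continuous on symmetric matrices, $\lambda_{\min}(\hat\varSigma_T) \prob \lambda_{\min}(\varSigma_T) > 0$; therefore $\pr(\lambda_{\min}(\hat\varSigma_T) > \tfrac12 \lambda_{\min}(\varSigma_T)) \to 1$, and on that event $\hat\varSigma_T$ (which is symmetric by construction, being a sum of symmetric matrices) is positive definite. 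This gives the claim that with probability tending to one $\hat\varSigma_T$ is positive definite, completing the proof.
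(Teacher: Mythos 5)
Your part~(b) is correct and is exactly the paper's argument: $\hat{\varSigma}_T\prob\varSigma_T$ by Lemma~\ref{l:11}, and positive definiteness is an open condition on symmetric matrices. In part~(a), however, you correctly diagnose that the naive bound $S(X^0,\mu_a)\geq 0$ is insufficient (since $1-2\mu_a^{\top}V_A^{-1}\mu_a$ may be negative), but the resolution you sketch stops precisely where the work is, and the heuristic you offer for it is wrong. Writing $t:=\mu_a^{\top}V_A^{-1}\mu_a$, the error-linear part of the summand $z_i$ in \eqref{ZI} is $(1-a_i^{0\top}V_A^{-1}\mu_a)(\tilde{b}_i-X^{0\top}\tilde{a}_i)$: the correction term $s^{\top}(a_i,b_i;X^0)V_A^{-1}\mu_a$ contributes $-(a_i^{0\top}V_A^{-1}\mu_a)\,\tilde{b}_i$ through $W_{i2}=a_i^0\tilde{b}_i^{\top}$, which has \emph{exactly} the same algebraic form as the ``direct'' term $\tilde{b}_i$ and cancels it completely for any $i$ with $a_i^{0\top}V_A^{-1}\mu_a=1$. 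So the claim that the $\tilde{b}$-dependence ``cannot be cancelled'' because it ``has a different algebraic form'' is false. What saves positivity is the averaging: $\lim_{m}m^{-1}\sum_i(1-a_i^{0\top}V_A^{-1}\mu_a)^2=1-2t+t=1-t$, which is strictly positive precisely because $t<1$ --- the very fact you derived via Sherman--Morrison and then set aside as ``not enough.'' You also need condition \eqref{vi} (vanishing third moments, hence uncorrelatedness of the linear and quadratic parts of $z_i$) to conclude that the quadratic-in-error part only adds a positive semidefinite contribution rather than a cross term that could eat into the lower bound.

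The paper's proof is a compact version of this: by condition \eqref{vi} the components $\varGamma_j$ are independent, so $S(X^0,\mu_a)$ dominates the covariance of its error-linear contribution, which is a nonnegative scalar multiple of $\mathrm{I}_d+X^{0\top}X^0$; combined with \eqref{ACT} this yields $\varSigma_T\geq\sigma^2(1-t)^2\mathrm{I}_d$, and then $t<1$ is deduced from $V_A>\mu_a\mu_a^{\top}$ (condition \eqref{viii}) by a Cauchy--Schwarz argument (the case $\mu_a=0$ being trivial). Your plan can be completed along these lines, but as written the ``technical heart'' of part~(a) is missing, and the stated reason it should go through does not survive scrutiny.
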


For $m\geq1$ and $\omega$ from the underlying probability space $\varOmega
$ such that $\hat{\varSigma}_T$ is positive definite, we define the test statistic
\begin{equation}
\label{TST} T_m^2 = m\cdot\big\|\hat{\varSigma}_T^{-1/2}T_m^0\big\|^2.
\end{equation}

Lemmas \ref{l:10} and \ref{l:11}\eqref{11.b} imply the following
convergence of the test statistic.
\begin{thm}\label{th:14}
Assume conditions \eqref{i} and \eqref{iv}--\eqref{viii}. Then under
hypothesis $\textbf{H}_0$,\break $T_m^2\convdistr\chi_d^2$ as $m\to\infty$.
\end{thm}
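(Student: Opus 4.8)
The plan is to read off the limit law of $T_m^2$ from the preceding lemmas by a combination of Slutsky's theorem and the continuous mapping theorem. One point of bookkeeping comes first: by Lemma \ref{l:13}\eqref{13.b}, the statistic $T_m^2$ from \eqref{TST} is defined on an event whose probability tends to $1$, and on its complement one may assign $T_m^2$ an arbitrary finite value; since altering a sequence of random variables on events of asymptotically vanishing probability does not affect its weak limit, it suffices to argue on the event $\{\hat\varSigma_T>0\}$. This bookkeeping --- ensuring that the behaviour of $\hat\varSigma_T$ where it fails to be positive definite is irrelevant --- is really the only delicate step; the rest is routine.

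Next I would treat the standardizing matrix. By Lemma \ref{l:13}\eqref{13.a} the matrix $\varSigma_T$ in \eqref{ACT} is positive definite, hence invertible, and the map $M\mapsto M^{-1/2}$, sending a symmetric positive definite matrix to the inverse of its unique symmetric positive definite square root, is continuous on the open cone of such matrices. Together with the consistency $\hat\varSigma_T\prob\varSigma_T$ from Lemma \ref{l:11}\eqref{11.b} and the continuous mapping theorem, this gives $\hat\varSigma_T^{-1/2}\prob\varSigma_T^{-1/2}$.

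Finally, Lemma \ref{l:10} gives $\sqrt m\,T_m^0\convdistr\zeta$ with $\zeta\sim N(0,\varSigma_T)$. Since $\hat\varSigma_T^{-1/2}$ converges in probability to the constant matrix $\varSigma_T^{-1/2}$, Slutsky's theorem yields
\[
\sqrt m\,\hat\varSigma_T^{-1/2}T_m^0=\hat\varSigma_T^{-1/2}\bigl(\sqrt m\,T_m^0\bigr)\convdistr\varSigma_T^{-1/2}\zeta\sim N\bigl(0,\varSigma_T^{-1/2}\varSigma_T\varSigma_T^{-1/2}\bigr)=N(0,\mathrm I_d),
\]
the last identity being exactly the purpose of the standardization in \eqref{TST}. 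One more use of the continuous mapping theorem, applied to the squared Euclidean norm, gives $T_m^2=\bigl\|\sqrt m\,\hat\varSigma_T^{-1/2}T_m^0\bigr\|^2\convdistr\|N(0,\mathrm I_d)\|^2$, and the right-hand side has the $\chi_d^2$ distribution by definition. This completes the proof.
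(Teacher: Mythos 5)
Your proposal is correct and is exactly the argument the paper intends: the paper states that Theorem \ref{th:14} follows from Lemmas \ref{l:10} and \ref{l:11}(b) (together with Lemma \ref{l:13} for the positive definiteness needed to form $\hat\varSigma_T^{-1/2}$), and you have simply written out the Slutsky/continuous-mapping details that the paper leaves implicit. Nothing is missing.
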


Given a confidence level $\alpha$, $0<\alpha<1/2$, let $\chi_{d\alpha
}^2$ be the upper $\alpha$-quantile of the $\chi_{d}^2~$ probability
law, that is, $\pr\{\chi_d^2 > \chi_{d\alpha}^2\} = \alpha$. Based on
Theorem \ref{th:14}, we construct the following goodness-of-fit test
with the asymptotic confidence probability $1-\alpha$:
\begin{center}
\ \ \ \,\quad If $T_m^2\leq\chi_{d\alpha}^2$, then we accept the null hypothesis, \\
and if $T_m^2>\chi_{d\alpha}^2$, then we reject the null hypothesis.
\end{center}
\section{Power of the test}\label{s:4}
Consider a sequence of models
\begin{equation}
\label{LAL} \textbf{H}_{1,m}{:}\quad b_i = X^{\top}a_i^0
+ \frac{g(a_i^0)}{\sqrt{m}} + \tilde{b}_i, \qquad a_i =
a_i^0 + \tilde{a}_i, \quad i =1, \ldots, m.
\end{equation}
Here $g:\mathbb{R}^n\to\mathbb{R}^d$ is a given nonlinear perturbation
of the linear regression function.

For arbitrary function $f(a^0)$, denote the limit of averages
\[
M\bigl(f\bigl(a^0\bigr)\bigr) = \lim_{m\to\infty}
\overline{f\bigl(a^0\bigr)},
\]
provided that the limit exists and is finite.

In order to study the behavior of the test statistic under local
alternatives $\textbf{H}_{1,m}$, we impose two restrictions on the
perturbation function $g$:
\begin{enumerate}\addtocounter{enumi}{8}
\item\label{ix}
There exist $M(g(a^0))$ and $M(g(a^0)a^{0\top})$.
\item\label{x}
$\overline{\|g(a^0)\|^2} = o(m)$ as $m\to\infty$.
\end{enumerate}

Under local alternatives $\textbf{H}_{1,m}$, we ensure the weak
consistency and asymptotic normality of the TLS estimator $\hat{X}$.
\begin{lemma}\label{l:15}
Assume conditions \eqref{i} and \eqref{iv}--\eqref{x}. Under local
alternatives $\textbf{H}_{1,m}$, we have:
\renewcommand{\theenumi}{\alph{enumi}}
\renewcommand{\labelenumi}{\rm(\alph{enumi})}
\begin{enumerate}
\item\label{15.a}
$\hat{X}\prob X^0$, $\hat{\sigma}^2\prob\sigma^2$.
\item\label{15.b}
$\sqrt{m}(\hat{X} - X^0)\convdistr V_A^{-1}\varGamma(X^0) +
V_A^{-1}M(a^0g^{\top}(a^0))$ as $m\to\infty$,
\end{enumerate}
where $\varGamma(X)$ is defined in \eqref{RM}, \eqref{CW}, and \eqref{AN1}.
\end{lemma}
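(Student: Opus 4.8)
The plan is to treat $\mathbf{H}_{1,m}$ as a triangular-array perturbation of the null model \eqref{OLSy} and reduce everything to the results already established for the latter. Write $b_i = b_i^{\mathrm{null}} + m^{-1/2}g(a_i^0)$ with $b_i^{\mathrm{null}} := X^{0\top}a_i^0 + \tilde b_i$, so that the pseudo-observations $(a_i, b_i^{\mathrm{null}})$ obey model \eqref{OLSy} and Theorems \ref{th:2}, \ref{th:4} and Lemma \ref{l:7} apply to them verbatim. The two new ingredients are the averaging assumptions \eqref{ix}--\eqref{x}, which bound the size of the perturbation, together with the elementary fact that both $q(a,b;X)$ and $s(a,b;X)$ are quadratic polynomials in $b$: replacing $b_i^{\mathrm{null}}$ by $b_i$ therefore produces only a term linear in $g(a_i^0)$ of order $m^{-1/2}$ and a term quadratic in $g(a_i^0)$ of order $m^{-1}$.

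For part \eqref{15.a}, expand $Q(X)=\sum_i q(a_i,b_i;X)$ about the null data: $m^{-1}Q(X)$ equals $m^{-1}Q_{\mathrm{null}}(X)$ plus a cross term of order $m^{-1/2}$ and a term bounded by $\mathit{const}\cdot m^{-1}\,\overline{\|g(a^0)\|^2}=o(1)$ by \eqref{x}, both uniformly for $X$ in compacts; the cross term is $o_p(1)$ uniformly on compacts by \eqref{ix} (for its deterministic part, through $\overline{a^0 g(a^0)^\top}\to M(a^0 g^{\top}(a^0))$) and by the law of large numbers for $\tilde a_i,\tilde b_i$ (for its mean-zero part, whose variance is $O(m^{-1}\overline{\|g(a^0)\|^2})=o(1)$). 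Hence $m^{-1}Q(X)$ has the same uniform-on-compacts limit as under $\mathbf{H}_0$, with unique minimizer $X^0$ by \eqref{iii}; combining this with the non-escape-to-infinity argument behind Theorem \ref{th:2} gives $\hat X\prob X^0$. Then $\overline{aa^{\top}}\prob V_A+\sigma^2\mathrm{I}_n$, $\overline{ab^{\top}}\prob V_A X^0$ and $\overline{bb^{\top}}\prob X^{0\top}V_A X^0+\sigma^2\mathrm{I}_d$ (the $g$-contributions being $o_p(1)$ by the same estimates), so substituting into \eqref{ES} and using the continuous mapping theorem yields $\hat\sigma^2\prob\sigma^2$.

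For part \eqref{15.b}, since $\hat X$ is now consistent, for all $m\ge m_0(\omega)$ it is a genuine root of $\sum_i s(a_i,b_i;\hat X)=0$ (the argument behind Lemma \ref{l:3} applies once $\hat X$ is the finite minimizer). Taylor-expanding $s(a_i,b_i;\cdot)$ about $X^0$ exactly as in the derivation of Lemma \ref{l:7} --- the Jacobian average still reduces, in the appropriate vectorized sense, to left multiplication by $-V_A$, because the $b$-perturbation contributes $o_p(1)$ there once $\hat X\prob X^0$ --- gives
\[
\sqrt m\bigl(\hat X-X^0\bigr)=-V_A^{-1}\,\frac{1}{\sqrt m}\sum_{i=1}^{m}s\bigl(a_i,b_i;X^0\bigr)+o_p(1).
\]
Using the quadratic dependence on $b$ one writes $s(a_i,b_i;X^0)=s(a_i,b_i^{\mathrm{null}};X^0)-m^{-1/2}a_i g(a_i^0)^\top+R_{i,m}$, where $R_{i,m}$ collects the remaining cross and quadratic terms in $g(a_i^0)$. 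Averaging $m^{-1/2}\sum_i$: the first summand converges in distribution to $-\varGamma(X^0)$ by Theorem \ref{th:4} and Lemma \ref{l:7} applied to the null pseudo-data; the second equals $-\overline{a\,g(a^0)^\top}$, which tends in probability to $-M(a^0 g^{\top}(a^0))$ by \eqref{ix} and the negligibility of its mean-zero part; and $m^{-1/2}\sum_i R_{i,m}=o_p(1)$ by \eqref{ix}--\eqref{x}. Substituting and applying Slutsky's theorem gives $\sqrt m(\hat X-X^0)\convdistr V_A^{-1}\varGamma(X^0)+V_A^{-1}M(a^0 g^{\top}(a^0))$.

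The main obstacle is the transfer of the estimating-equation/Taylor-expansion machinery of Lemmas \ref{l:3} and \ref{l:7}, set up for the fixed null model, to the triangular array generated by $\mathbf{H}_{1,m}$: one must check that $\hat X$ is eventually a genuine root of $\sum_i s=0$, that the Jacobian average converges to $-V_A$ uniformly in a neighbourhood of $X^0$ despite the $m$-dependent drift of the data, and that the second-order Taylor remainder --- now carrying extra $g$-dependent pieces --- is $o_p(1)$; the bookkeeping of these $g$-terms is precisely where conditions \eqref{ix} and \eqref{x} are used.
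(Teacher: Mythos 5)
Your part (b) is essentially the paper's argument: both pass to the expansion $\sqrt m(\hat X-X^0)=-V_A^{-1}m^{-1/2}\sum_i s(a_i,b_i;X^0)+o_p(1)$, isolate the perturbation's contribution to the estimating function, observe that the only non-negligible piece comes from the summand of $s$ linear in $b$ (the $-ab^\top$ term in \eqref{EF}), identify its limit $-M(a^0g^{\top}(a^0))$ via \eqref{ix}, and dispose of the cross and quadratic remainders via \eqref{ix}--\eqref{x}; your signs and final limit check out. Part (a) is where you genuinely diverge. The paper never touches the loss function: it recasts $\mathbf{H}_{1,m}$ as the perturbed implicit model \eqref{EMP}--\eqref{EMO} and verifies the two concrete sufficient conditions of Shklyar's consistency theorem (\cite{skl}, Theorem 4.1), namely $M_1\prob 0$ and $M_2\prob 0$ for explicitly normalized matrices, which reduces to three short moment estimates on the terms containing $G^0$. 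Your route --- uniform convergence of $m^{-1}Q(X)$ on compacts plus identifiability of $X^0$ --- is workable in principle, but it leans on ``the non-escape-to-infinity argument behind Theorem~\ref{th:2}'' as a black box. That is the one place where your proof is materially thinner than the paper's: Theorem~\ref{th:2} is itself imported from \cite{skl}, its non-escape mechanism is exactly the spectral argument whose hypotheses must be re-verified for the triangular array created by the $m$-dependent drift $m^{-1/2}g(a_i^0)$, and asserting that the argument ``still applies'' is not that verification. The paper's reduction to $M_1\prob0$, $M_2\prob0$ is precisely the verification, purchased at the cost of the three displayed bounds on $m^{-3/2}A^{0\top}G^0$, $m^{-2}G^{0\top}G^0$, and $m^{-3/2}\tilde C^{\top}G^0$. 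Everything else in your write-up (the consistency of $\hat\sigma^2$ via the sample second moments, the treatment of $-\overline{a\,g(a^0)^{\top}}$, the division of labour between \eqref{ix} and \eqref{x}) matches the paper.
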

\begin{lemma}\label{l:16}
Assume the conditions of Lemma \emph{\ref{l:15}}. Then under local
alternatives $\textbf{H}_{1,m}$, we have:
\renewcommand{\theenumi}{\alph{enumi}}
\renewcommand{\labelenumi}{\rm(\alph{enumi})}
\begin{enumerate}
\item\label{16.a}
$\sqrt{m}T_m^0\convdistr N(C_T,\varSigma_T)$,

where $\varSigma_T$ is given by \eqref{ACT}, and
\begin{equation}
\label{PNC} C_T := M\bigl(g\bigl(a^0\bigr)\bigr) - M
\bigl(g\bigl(a^0\bigr)a^{0\top}\bigr)V_A^{-1}
\mu_a.
\end{equation}
\item\label{16.b}
The estimator $\hat{\varSigma}_T$ given in \eqref{EACS} tends in
probability to the asymptotic covariance matrix
$\varSigma_T$.\vadjust{\eject}
\end{enumerate}
\end{lemma}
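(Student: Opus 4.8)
The plan is to mirror the null-hypothesis results, Lemmas~\ref{l:10} and~\ref{l:11}, the only new ingredient being the deterministic effect of the perturbation $g$; note that under $\textbf{H}_{1,m}$ only the means of the $b_i$ change, so the errors $\tilde a_i,\tilde b_i$ and the element $W_i$ of~\eqref{RM} keep their joint distribution, and hence all limit covariance matrices are as under $\textbf{H}_0$.

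\emph{Part~(a).} Substituting $b_i = X^{0\top}a_i^0 + g(a_i^0)/\sqrt m + \tilde b_i$ and $a_i = a_i^0 + \tilde a_i$ into $\sqrt m\,T_m^0 = m^{-1/2}\sum_{i=1}^m(b_i - \hat{X}^{\top}a_i)$ and rearranging exactly as in the proof of Lemma~\ref{l:9} yields, under $\textbf{H}_{1,m}$,
\[
\sqrt m\,T_m^0 = \frac1{\sqrt m}\sum_{i=1}^m\bigl(\tilde b_i - X^{0\top}\tilde a_i\bigr) - \sqrt m\bigl(\hat{X} - X^0\bigr)^{\top}\overline{a^0} + \overline{g(a^0)} + o_p(1),
\]
since the leftover term $(\hat{X} - X^0)^{\top}m^{-1/2}\sum_i\tilde a_i$ is $o_p(1)$ (using $\hat{X} - X^0 = O_p(m^{-1/2})$ from Lemma~\ref{l:15}(b) and $m^{-1/2}\sum_i\tilde a_i = O_p(1)$) and the only new term, $\overline{g(a^0)}$, tends to $M(g(a^0))$ by~\eqref{ix}. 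From here I would follow the proof of Lemma~\ref{l:10}, with Lemma~\ref{l:15}(b) in place of Theorem~\ref{th:4}(b): the stochastic expansion of $\sqrt m(\hat{X} - X^0)$ available from the proof of Lemma~\ref{l:15} (equivalently, the expansion of Lemma~\ref{l:7}, whose argument survives under $\textbf{H}_{1,m}$) is linear in $m^{-1/2}\sum_i s(a_i,b_i;X^0)$ up to $o_p(1)$ and a deterministic drift, so the multivariate Lindeberg CLT, under~\eqref{iv}--\eqref{v}, gives the joint asymptotic normality of $\bigl(m^{-1/2}\sum_i(\tilde b_i - X^{0\top}\tilde a_i),\,\sqrt m(\hat{X} - X^0)\bigr)$. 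Passing to the limit with $\overline{a^0}\to\mu_a$ (condition~\eqref{vii}), the random part of $\sqrt m\,T_m^0$ coincides in distribution with the one arising under $\textbf{H}_0$, and hence is $N(0,\varSigma_T)$ by Lemma~\ref{l:10}, whereas the deterministic part equals $M(g(a^0)) - M(g(a^0)a^{0\top})V_A^{-1}\mu_a = C_T$ --- the second summand coming from the drift $V_A^{-1}M(a^0 g^{\top}(a^0))$ of Lemma~\ref{l:15}(b), multiplied on the left by $\mu_a^{\top}$ (here $V_A^{\top}=V_A$). Slutsky's lemma then gives $\sqrt m\,T_m^0\convdistr N(C_T,\varSigma_T)$.

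\emph{Part~(b).} I would check that every block of $\hat{\varSigma}_T$ in~\eqref{EACS} keeps the probability limit it has under $\textbf{H}_0$. The statistics $\bar a$ and $\overline{aa^{\top}}$ do not involve $b$, hence are unchanged; thus $\hat{\mu}_a = \bar a\prob\mu_a$ by Lemma~\ref{l:11}(a), and, as in the proof of Lemma~\ref{l:6}, $\overline{aa^{\top}}\prob V_A + \sigma^2\mathrm{I}_n$, which together with $\hat{\sigma}^2\prob\sigma^2$ and $\hat{X}\prob X^0$ from Lemma~\ref{l:15}(a) gives $\hat{V}_A = \overline{aa^{\top}} - \hat{\sigma}^2\mathrm{I}_n\prob V_A$. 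The one step genuinely using $\textbf{H}_{1,m}$ is $\hat{S}(\hat{\mu}_a)\prob S(X^0,\mu_a)$: here I would rerun the proof of Theorem~\ref{th:8}, noting that replacing $b_i$ by $b_i + g(a_i^0)/\sqrt m$ changes $s(a_i,b_i;\hat{X})$ by a term whose contribution to $m^{-1}\sum_i s^{\top}\hat{V}_A^{-1}\hat{\mu}_a\hat{\mu}_a^{\top}\hat{V}_A^{-1}s$ is, by the Cauchy--Schwarz inequality and the consistency of $\hat{X},\hat{V}_A,\hat{\mu}_a$, of order $O_p\bigl(\overline{\|g(a^0)\|^2}/m\bigr) + O_p\bigl(m^{-1/2}\bigl(\overline{\|g(a^0)\|^2}\bigr)^{1/2}\bigr)$, which is $o_p(1)$ by condition~\eqref{x}; the remaining, $g$-free part reproduces the proof of Theorem~\ref{th:8} verbatim. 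Plugging these limits into~\eqref{EACS} and applying the continuous mapping theorem yields $\hat{\varSigma}_T\prob\sigma^2(1 - 2\mu_a^{\top}V_A^{-1}\mu_a)(\mathrm{I}_d + X^{0\top}X^0) + S(X^0,\mu_a) = \varSigma_T$.

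The main obstacle, in part~(a), is obtaining the \emph{joint} limit of $m^{-1/2}\sum_i(\tilde b_i - X^{0\top}\tilde a_i)$ and $\sqrt m(\hat{X} - X^0)$ --- which is why one should go through the stochastic expansion of $\hat{X}$ rather than merely quote Lemma~\ref{l:15}(b) --- together with the bookkeeping that $g$ enters the limit only through deterministic means, so that $\varSigma_T$ is unchanged. In part~(b) the difficulty is purely technical: isolating the $g$-dependent part of $s(a_i,b_i;\hat{X})$ and bounding its average with condition~\eqref{x}.
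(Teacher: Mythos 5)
Your proof is correct and follows essentially the same route as the paper: both parts reduce the problem to the null-hypothesis results (Lemmas \ref{l:10} and \ref{l:11}, Theorem \ref{th:8}) via the expansions \eqref{AEX1} and \eqref{PE} from the proof of Lemma \ref{l:15}, with the perturbation $g$ contributing only the deterministic shift $C_T$ in part (a) and an $o_p(1)$ correction to $\hat{S}(\hat{\mu}_a)$ in part (b). The only cosmetic difference is that the paper writes $\sqrt{m}T_m^0|_{\textbf{H}_{1,m}} = \sqrt{m}T_m^0|_{\textbf{H}_{0}} + M(g(a^0)) - \sqrt{m}(\hat{X}|_{\textbf{H}_{1,m}} - \hat{X}|_{\textbf{H}_{0}})^{\top}\mu_a + o_p(1)$ and shows the middle terms converge in probability to constants, which disposes of the joint-convergence concern you raise by a one-line appeal to Slutsky rather than by rerunning the CLT.
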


Now, we define the noncentral chi-squared distribution $\chi_d^2(\tau)$
with $d$ degrees of freedom and the noncentrality parameter $\tau$.
\begin{defin}\label{d:17}
For $d\geq1$ and $\tau\geq0$, let $\chi_d^2(\tau)\distr\|N(\tau
e,\mathrm{I}_d)\|^2$, where $e\in\mathbb{R}^d$, $\|e\|=1$, or,
equivalently, $\chi_d^2(\tau)\distr(\gamma_1 + \tau)^2 +
\sum_{i=2}^{d}\gamma_i^2$, where $\{\gamma_i\}$ are i.i.d.\ standard
normal random variables.
\end{defin}

Lemma \ref{l:16} implies directly the following convergence.
\begin{thm}\label{th:18}
Assume conditions \eqref{i} and \eqref{iv}--\eqref{x}. Then under
local alternatives $\textbf{H}_{1,m}$, we have:
\begin{equation}
\label{CLA} T_m^2\convdistr\chi_d^2(
\tau), \qquad \tau:= \big\|\varSigma_T^{-1/2}C_T\big\|,
\end{equation}
where $C_T$ is given in \eqref{PNC}.
\end{thm}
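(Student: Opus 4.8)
The plan is to feed the two assertions of Lemma \ref{l:16} into Slutsky's theorem and the continuous mapping theorem, exactly as in the derivation of Theorem \ref{th:14}; the only new feature is the nonzero mean $C_T$ of the limiting Gaussian vector, which turns the limiting $\chi_d^2$ law into a noncentral one.

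First I would note that, on the event where $\hat{\varSigma}_T$ is positive definite, the statistic $T_m^2$ from \eqref{TST} can be written as $T_m^2=\big\|\hat{\varSigma}_T^{-1/2}\sqrt{m}\,T_m^0\big\|^2$. By Lemma \ref{l:16}\eqref{16.b} we have $\hat{\varSigma}_T\prob\varSigma_T$, and $\varSigma_T$ is positive definite by Lemma \ref{l:13}\eqref{13.a} (whose hypotheses are part of the assumptions of Theorem \ref{th:18}); hence this event has probability tending to one, and on its complement we may set $T_m^2$ to an arbitrary value without affecting the limit in distribution. Moreover, since the map $M\mapsto M^{-1/2}$ is continuous on the set of symmetric positive definite matrices, the continuous mapping theorem yields $\hat{\varSigma}_T^{-1/2}\prob\varSigma_T^{-1/2}$.

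Next, Lemma \ref{l:16}\eqref{16.a} gives $\sqrt{m}\,T_m^0\convdistr\zeta$, where $\zeta\distr N(C_T,\varSigma_T)$. Combining this with $\hat{\varSigma}_T^{-1/2}\prob\varSigma_T^{-1/2}$ via Slutsky's theorem, and then applying the continuous mapping theorem to the function $v\mapsto\|v\|^2$, I obtain
\[
T_m^2=\big\|\hat{\varSigma}_T^{-1/2}\sqrt{m}\,T_m^0\big\|^2\convdistr\big\|\varSigma_T^{-1/2}\zeta\big\|^2 .
\]
Since $\zeta\distr N(C_T,\varSigma_T)$, the vector $\varSigma_T^{-1/2}\zeta$ is Gaussian with mean $\varSigma_T^{-1/2}C_T$ and covariance $\varSigma_T^{-1/2}\varSigma_T\varSigma_T^{-1/2}=\mathrm{I}_d$. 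Writing $\varSigma_T^{-1/2}C_T=\tau e$ with $\tau=\big\|\varSigma_T^{-1/2}C_T\big\|$ and $\|e\|=1$ (or $\tau=0$ when $C_T=0$), Definition \ref{d:17} identifies $\big\|N(\varSigma_T^{-1/2}C_T,\mathrm{I}_d)\big\|^2$ with $\chi_d^2(\tau)$, which is precisely \eqref{CLA}.

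There is essentially no obstacle here: each step is a routine use of Slutsky's theorem, the continuous mapping theorem, and the invariance of the Gaussian family under linear maps. The two minor points to keep in mind are that $T_m^2$ is a priori defined only on the asymptotically certain event $\{\hat{\varSigma}_T\text{ positive definite}\}$, which is harmless for convergence in distribution, and that the noncentrality parameter in Definition \ref{d:17} depends on the mean vector only through its Euclidean norm, by the rotational invariance of the standard $d$-dimensional normal distribution; this is what reduces the mean $\varSigma_T^{-1/2}C_T$ to the single number $\tau=\|\varSigma_T^{-1/2}C_T\|$.
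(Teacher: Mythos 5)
Your proposal is correct and matches the paper's intent exactly: the paper states that Theorem \ref{th:18} follows directly from Lemma \ref{l:16} (via Slutsky's theorem and the continuous mapping theorem, just as Theorem \ref{th:14} follows from Lemmas \ref{l:10} and \ref{l:11}), and your write-up simply fills in those routine steps, including the correct handling of the positive-definiteness event and the reduction of the mean vector to the noncentrality parameter $\tau=\|\varSigma_T^{-1/2}C_T\|$.
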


Theorem \ref{th:18} makes it possible to find the asymptotic power of
the test under local alternatives $\textbf{H}_{1,m}$. It is evident
that the asymptotic power is an increasing function of $\tau= \|\varSigma
_T^{-1/2}C_T\|$. In other words, the larger $\tau$, the more powerful
the test.
\section{Conclusion}\label{s:5}

We constructed a goodness-of-fit test for a multivariate linear
errors-in-variables\break model, provided that the errors are uncorrelated
with equal (unknown) variances and vanishing third moments. The latter
moment assumption makes it possible to estimate consistently the
asymptotic covariance matrix $\varSigma_T$ of the statistic $T_m^0$ and
construct the test statistic $T_m^2$, which has the asymptotic $\chi
_d^2$ distribution under the null hypothesis. The local alternatives
$\textbf{H}_{1,m}$ are presented, under which the test statistic has
the noncentral $\chi_d^2(\tau)$ asymptotic distribution. The larger
$\tau$, the larger the asymptotic power of the test.

In future, we will try to construct, like in \cite{kupa}, a more
powerful test using within a test statistic the exponential weight
function
\[
\omega_{\lambda}(a) = e^{\lambda^{\top}a}, \quad \lambda\in\mathbb
{R}^{n\times1}.
\]
To this end, it is necessary to require the independence he terrors
$\tilde{b}_i$ and $\tilde{a}_i$ and also the existence of exponential
moments of the errors $\tilde{a}_i$. This is the price for a greater
power of the test.
\section*{Appendix}
\begin{lemma}\label{l:19}
Let $r>1$ be a fixed real number, and $\{\eta_k\}$ be an i.i.d.
sequence with zero mean and finite moment $\M|\eta_1|^r$. Assume also
that a sequence $\{d_k\}$ of real numbers satisfies
\[
\frac{1}{m^r}\sum_{k=1}^{m}|d_k|^r
\to0 \quad\text{as } m\to\infty.
\]
Then
\begin{equation}
\label{LLN} \overline{d\eta} = \frac{1}{m}\sum
_{k=1}^{m}d_k\eta_k\prob0.
\end{equation}
\end{lemma}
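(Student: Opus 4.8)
The plan is to prove the statement by a classical truncation argument, splitting each summand according to whether $|d_k\eta_k|$ exceeds the normalization level $m$. Put $\eta_k' = \eta_k\,\mathbf 1\{|d_k\eta_k|\le m\}$ and $\eta_k'' = \eta_k - \eta_k'$, and write
\[
\overline{d\eta} = \frac{1}{m}\sum_{k=1}^{m}d_k\bigl(\eta_k' - \M\eta_k'\bigr) + \frac{1}{m}\sum_{k=1}^{m}d_k\M\eta_k' + \frac{1}{m}\sum_{k=1}^{m}d_k\eta_k''.
\]
I will show that each of the three terms on the right tends to $0$ in probability; the only inputs are Markov's inequality, the zero-mean assumption, Chebyshev's inequality, and the hypothesis $m^{-r}\sum_{k=1}^{m}|d_k|^r\to 0$.

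For the tail term, observe $\pr\{\frac1m\sum_{k=1}^m d_k\eta_k''\neq 0\}\le\sum_{k=1}^{m}\pr\{|d_k\eta_k|>m\}$, and, by Markov's inequality applied to $|\eta_1|^r$, $\pr\{|d_k\eta_1|>m\}\le m^{-r}|d_k|^r\,\M|\eta_1|^r$ (trivially true when $d_k=0$). Summing and invoking the hypothesis gives $\sum_{k=1}^{m}\pr\{|d_k\eta_k|>m\}\to 0$, so the tail term is $o_p(1)$. For the centering term, $\M\eta_k=0$ yields $|\M\eta_k'| = |\M[\eta_k\mathbf 1\{|d_k\eta_k|>m\}]|\le\M[|\eta_1|\mathbf 1\{|d_k\eta_1|>m\}]$. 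On the event $\{|d_k\eta_1|>m\}$ one has $|d_k\eta_1|/m>1$, whence, using $r>1$, $|d_k| \le |d_k|^{r}|\eta_1|^{r-1}m^{1-r}$; consequently $\frac1m\sum_{k=1}^m|d_k|\,\M|\eta_k'|\le m^{-r}\M|\eta_1|^r\sum_{k=1}^{m}|d_k|^r\to 0$.

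For the centered truncated term, Chebyshev's inequality reduces matters to showing $\frac{1}{m^2}\sum_{k=1}^{m}d_k^2\,\M[(\eta_k')^2]\to 0$. The key estimate is $\M[(\eta_k')^2]=\M[\eta_1^2\mathbf 1\{|\eta_1|\le m/|d_k|\}]\le (m/|d_k|)^{2-r}\M|\eta_1|^r$, after which $\frac{1}{m^2}\sum_{k=1}^{m}d_k^2\,\M[(\eta_k')^2]\le m^{-r}\M|\eta_1|^r\sum_{k=1}^{m}|d_k|^r\to 0$. Adding the three bounds gives $\overline{d\eta}\prob 0$.

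I expect the main technical point — the place where the interplay between the truncation level $m$ and the $L^r$-hypothesis is used in an essential way — to be this last variance estimate: the bound $\M[\eta_1^2\mathbf 1\{|\eta_1|\le t\}]\le t^{2-r}\M|\eta_1|^r$ is exactly what converts a second-moment control into the $r$th-moment sum appearing in the hypothesis, and it is what forces the truncation to be performed at level $m$ rather than at a fixed level. The tail and centering terms, by contrast, are routine once that truncation level has been selected.
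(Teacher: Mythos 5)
Your decomposition is exactly the one the paper uses: the paper invokes the degenerate convergence criterion (Theorem 5 in Petrov, Chap.~VI), whose three sufficient conditions are precisely your three terms --- sum of tail probabilities, normalized truncated variances, normalized truncated means --- and it verifies them with the same three Markov/Chebyshev bounds you derive (tail: $\pr\{|d_k\eta_k|>m\}\le m^{-r}|d_k|^r\M|\eta_1|^r$; variance: $d_k^2\M[\eta_k^2\mathrm{I}(|d_k\eta_k|<m)]\le m^{2-r}|d_k|^r\M|\eta_1|^r$; centering via $\M\eta_k=0$). The only difference is that you prove the sufficiency of the criterion inline rather than citing it, which is a harmless and arguably cleaner choice.

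There is one point you pass over silently: the variance estimate $\M[\eta_1^2\mathbf{1}\{|\eta_1|\le t\}]\le t^{2-r}\M|\eta_1|^r$ is valid only when $2-r\ge 0$, since it rests on $|\eta_1|^{2-r}\le t^{2-r}$ on the event $\{|\eta_1|\le t\}$; for $r>2$ the inequality goes the wrong way. The lemma is stated for all $r>1$, so as written your argument covers only $1<r\le 2$. The paper handles this by opening with ``without loss of generality we may assume $1<r<2$'' --- a reduction that is itself not automatic, because the hypothesis $m^{-r}\sum_{k\le m}|d_k|^r\to 0$ for a given $r$ does not imply the corresponding hypothesis for a smaller exponent (and indeed, for the paper's application $r=1+\delta/2$ can be taken in $(1,2)$ anyway). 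You should at least state the restriction $r\le 2$ where you use it; with that caveat the proof is correct and coincides with the paper's.
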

\begin{proof}
Without of loss generality, we may and do assume that $1<r<2$. It
suffices to check that the following three conditions from Theorem 5 in
[8, Chap.~VI] hold, which provide a criterion for the convergence \eqref{LLN}:

(a) $\displaystyle\sum_{k=1}^{m}\pr\{|d_k\eta_k|>m\}\leq
\sum_{k=1}^{m}\frac{\M|d_k\eta_k|^r}{m^r} =
\frac{\M|\eta_1|^r}{m^r}\sum_{k=1}^{m}|d_k|^r\to0$ as $m\to\infty$;

(b) $\displaystyle\frac{1}{m^2}\sum_{k=1}^{m}\D(d_k\eta_k\mathrm
{I}(|d_k\eta_k|<m))\leq
\frac{1}{m^2}\sum_{k=1}^{m}\M(d_k^2\eta_k^2\mathrm{I}(|d_k\eta_k|<m))$

\ \ \qquad$\leq\displaystyle\frac{1}{m^2}\sum_{k=1}^{m}\M|d_k\eta_k|^r\cdot
m^{2-r} =
\frac{\M|\eta_1|^r}{m^r}\sum_{k=1}^{m}|d_k|^r\to0$ as $m\to\infty$;

(c) $\varepsilon_m := \displaystyle\frac{1}{m}\sum_{k=1}^{m}\M(d_k\eta
_k\mathrm{I}(|d_k\eta_k|<m)) =
-\frac{1}{m}\sum_{k=1}^{m}\M(d_k\eta_k\mathrm{I}(|d_k\eta_k|\geq m))$,

\quad$|\varepsilon_m| \leq\displaystyle\frac{1}{m}\sum_{k=1}^{m}\M|d_k\eta
_k|^r\cdot\frac{1}{m^{r-1}} =
\frac{\M|\eta_1|^r}{m^r}\sum_{k=1}^{m}|d_k|^r\to0$ as $m\to\infty$.

By the mentioned theorem from \cite{petr} the presented bounds imply
the desired convergence.
\end{proof}

The next statement is a version of the Lyapunov CLT.
\begin{lemma}\label{l:20}
Let $\{z_i\}$ be a sequence of independent centered random vectors in
$\mathbb{R}^p$ with $\overline{\cov(z)} = \frac{1}{m}\sum_{i=1}^{m}\cov(z_i)\to S$ as $m\to\infty$. Assume also that, for some
$\delta>0$,
\begin{equation}
\label{BM} \frac{1}{m^{1 + \delta/2}}\sum_{i=1}^{m}
\M\|z_i\|^{2+\delta}\leq \mathit{const}.
\end{equation}
Then
\[
\frac{1}{\sqrt{m}}\sum_{i=1}^{m}
z_i\convdistr N(0,S).
\]
\end{lemma}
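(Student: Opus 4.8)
The plan is to derive this from the classical Lyapunov central limit theorem for triangular arrays, since the hypotheses are precisely the Lyapunov conditions once we pass to normalized summands. First I would introduce the triangular-array notation $z_{m,i} := z_i/\sqrt{m}$ for $i=1,\dots,m$, so that the partial sums of interest are $\sum_{i=1}^{m} z_{m,i} = \frac{1}{\sqrt{m}}\sum_{i=1}^m z_i$, and note that the rows are independent and centered. The covariance of the $m$th row sum is $\frac{1}{m}\sum_{i=1}^m \cov(z_i) = \overline{\cov(z)}$, which by hypothesis converges to $S$. Then I would verify the Lyapunov ratio: with the same $\delta>0$ as in \eqref{BM},
\[
\sum_{i=1}^{m}\M\|z_{m,i}\|^{2+\delta} = \frac{1}{m^{1+\delta/2}}\sum_{i=1}^m \M\|z_i\|^{2+\delta} \leq \mathit{const}\cdot m^{-\delta/2}\cdot \frac{1}{m}\cdot m = o(1)?
\]
Here I must be slightly careful: condition \eqref{BM} says $\frac{1}{m^{1+\delta/2}}\sum_{i=1}^m \M\|z_i\|^{2+\delta}$ is bounded by a constant, not that it is $o(1)$; but $\sum_i \M\|z_{m,i}\|^{2+\delta} = m^{-(2+\delta)/2}\sum_i \M\|z_i\|^{2+\delta} = m^{-\delta/2}\cdot\big(m^{-(1+\delta/2)}\sum_i\M\|z_i\|^{2+\delta}\big)\leq \mathit{const}\cdot m^{-\delta/2}\to 0$. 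So the Lyapunov condition holds, and the extra factor $m^{-\delta/2}$ is exactly what rescues boundedness into convergence to zero.

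With the Lyapunov condition in hand, the multivariate Lyapunov CLT gives convergence of $\frac{1}{\sqrt m}\sum_{i=1}^m z_i$ in distribution to a centered Gaussian vector whose covariance is $S$ — formally one applies the Cramér--Wold device, reducing to the scalar Lyapunov CLT for $\langle u, z_i\rangle$ for each fixed $u\in\mathbb{R}^p$, noting $\M|\langle u,z_i\rangle|^{2+\delta}\leq \|u\|^{2+\delta}\M\|z_i\|^{2+\delta}$ so the scalar Lyapunov ratio still vanishes, and $\frac{1}{m}\sum_i \D\langle u,z_i\rangle = u^\top \overline{\cov(z)}\, u \to u^\top S u$. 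The one genuinely delicate point — the ``main obstacle,'' though it is minor — is the degenerate case where $u^\top S u = 0$ for some $u\neq 0$ (equivalently $S$ singular): then the limiting scalar variance is $0$ and one should observe that $\langle u, \frac{1}{\sqrt m}\sum_i z_i\rangle\prob 0$, which is still $N(0, u^\top S u)$, so Cramér--Wold still identifies the limit as $N(0,S)$; no nondegeneracy of $S$ is needed. Assembling the Cramér--Wold conclusion over all $u$ yields $\frac{1}{\sqrt m}\sum_{i=1}^m z_i\convdistr N(0,S)$, completing the proof.
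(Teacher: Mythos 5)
Your overall strategy --- rescaling to the triangular array $z_{m,i}:=z_i/\sqrt m$, checking a Lyapunov-type condition, and finishing with the Cram\'er--Wold device (including the correct remark that a singular $S$ causes no trouble) --- is exactly the natural route; the paper in fact gives no proof at all, merely calling the statement ``a version of the Lyapunov CLT.'' The problem is the step where you claim to ``rescue boundedness into convergence to zero.'' Since $(2+\delta)/2=1+\delta/2$, you have
\[
\sum_{i=1}^{m}\M\|z_{m,i}\|^{2+\delta}=\frac{1}{m^{1+\delta/2}}\sum_{i=1}^{m}\M\|z_i\|^{2+\delta},
\]
with no spare factor $m^{-\delta/2}$ left over: your identity $m^{-(2+\delta)/2}=m^{-\delta/2}\cdot m^{-(1+\delta/2)}$ would force $1+\delta=1+\delta/2$, i.e.\ $\delta=0$. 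So the Lyapunov ratio of the normalized array is \emph{exactly} the quantity that \eqref{BM} controls, and \eqref{BM} gives only $O(1)$, not $o(1)$; the Lyapunov (hence Lindeberg) condition is not verified by your computation.

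This is not a cosmetic slip, because boundedness of the Lyapunov ratio together with $\overline{\cov(z)}\to S$ genuinely does not imply asymptotic normality. For example, take $p=1$ and, for each index $i$ in the dyadic block $(2^{K-1},2^{K}]$, let $z_i=\pm 2^{K/2}$ with probability $\lambda 2^{-K}$ each and $z_i=0$ otherwise, independently; then $\D z_i=2\lambda$ for every $i$, the ratio in \eqref{BM} stays bounded for every $\delta>0$, yet along $m=2^{K}$ the normalized sums converge to a compound-Poisson-type (non-Gaussian) law. Hence no argument can close the gap without strengthening \eqref{BM} to convergence to zero (or verifying Lindeberg directly). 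Fortunately that stronger form is what actually holds where the lemma is used in the proof of Lemma \ref{l:10}: the i.i.d.\ error contributions give $m\,\M\|\tilde c_1\|^{2+\delta}/m^{1+\delta/2}=O(m^{-\delta/2})$, and the terms involving $a_i^0$ are $o(1)$ by condition \eqref{v}. With the hypothesis read as ``$\to 0$,'' your argument --- corrected exponent, Cram\'er--Wold, and the degenerate-variance remark --- goes through verbatim.
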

\begin{proof}[Proof of Theorem \ref{th:8}]
(a) We have:
\begin{align}
S(f) &:= \frac{1}{m}\sum_{i=1}^{m}s^{\top}
\bigl(a_i,b_i;X^0\bigr)~V_A^{-1}ff^{\top
}V_A^{-1}~s
\bigl(a_i,b_i;X^0\bigr)\notag\\
\label{Sf} &\ =\bigl(S(f) - \M S(f)\bigr) + \M S(f).
\end{align}

In the proof of Theorem 8(a) in \cite{kutsa}, the following expansion
of the estimating function is used:
\begin{align}
s\bigl(a_i,b_i;X^0\bigr) &=
W_{i1}X^0 - W_{i2} + W_{i3}X^0
- W_{i4} - X^0\bigl(\mathrm {I}_d +
X^{0\top}X^0\bigr)^{-1}
\\
&\quad\times\bigl(X^{0\top}W_{i3}X^0 -
X^{0\top}W_{i4} - W_{i4}^{\top}X^0
+ W_{i5}\bigr), \label{EFE}
\end{align}
where $W_{ij}$ are the components of the matrix collection \eqref{RM}.

We show that the term in parentheses on the right-hand side of \eqref
{Sf} tends to zero in probability. Taking into account expansion \eqref
{EFE}, we write down one of summands\vadjust{\eject} of the expression $S(f)$:
\begin{equation}
\label{zz} L_m := \frac{1}{m}\sum
_{i=1}^{m}X^{0\top}\tilde{a}_ia_i^{0\top
}Za_i^0
\tilde{a}_i^{\top}X^0, \quad Z :=
V_A^{-1}ff^{\top}V_A^{-1}.
\end{equation}
Let us explain why
\begin{equation}
\label{Lm} L_m - \M L_m \prob0.
\end{equation}
It suffices to consider the matrix
\[
\tilde{L}_m := \frac{1}{m}\sum_{i=1}^{m}
\tilde{a}_ia_i^{0\top
}Za_i^0
\tilde{a}_i^{\top}.
\]
Up to a constant, its entries contain summands of the form
\[
\frac{1}{m}\sum_{i=1}^{m}
\tilde{a}_i^{(j)} a_i^{0(p)}
a_i^{0(q)}\tilde {a}_i^{(r)}.
\]
Applying Lemma \ref{l:19} to the expression
\begin{equation}
\label{CS} \frac{1}{m}\sum_{i=1}^{m}a_i^{0(p)}
a_i^{0(q)} \bigl(\tilde{a}_i^{(j)}
\tilde{a}_i^{(r)} - \M\tilde{a}_i^{(j)}
\tilde{a}_i^{(r)} \bigr),
\end{equation}
we have $\M (\tilde{a}_i^{(j)} \tilde{a}_i^{(r)} )^2 \leq\M
\|\tilde{a}_i\|^4 <\infty$, and for $\delta$ from condition \eqref{v},
we have:
\[
\frac{1}{m^{1 + \delta/2}}\sum_{i=1}^{m}\big|a_i^{0(p)}
a_i^{0(q)}\big|^{1 +
\delta/2}\leq \frac{1}{m^{1 + \delta/2}}\sum
_{i=1}^{m}\big\|a_i^0\big\|^{2 + \delta}
\to0\quad \text{as } m\to\infty.
\]
Thus, by Lemma \ref{l:19} expression \eqref{CS} tends to zero in
probability. Then
\[
\tilde{L}_m - \M\tilde{L}_m \prob0,
\]
whence we get \eqref{Lm}.

In a similar way, other summands of $S(f)$ can be studied, and therefore,
\[
S(f) - \M S(f)\prob0.
\]
Next, we verify directly the convergence
\[
\M S(f) \to S\bigl(X^0,f\bigr) = \M\varGamma^{\top}
\bigl(X^0\bigr)V_A^{-1}ff^{\top
}V_A^{-1}
\varGamma\bigl(X^0\bigr) \quad\text{as } m\to\infty.
\]
Therefore, $S(f)\prob S(X^0,f)$.

(b) Without any problem, in view of Theorem \ref{th:2} and the
consistency of estimators $\hat{V}_A$ and $\hat{f}$, the following
convergences can be shown:
\begin{align*}
&S(f) - \hat{S}(f)\prob0, \quad \hat{S}(f):=\frac{1}{m}\sum
_{i=1}^{m}s^{\top}(a_i,b_i;
\hat{X})\cdot Z\cdot s(a_i,b_i;\hat{X});\\
&\hat{S}(f) - \hat{S}(\hat{f})\prob0.
\end{align*}
Here $Z$ is the matrix from relations \eqref{zz}.

The desired convergence follows from the convergences established in
parts (a) and (b) of the proof.\vadjust{\eject} \end{proof}
\begin{proof}[Proof of Lemma \ref{l:9}]
For model \eqref{MLM1}--\eqref{MLM2}, we have:
\begin{align}
\hspace*{24pt}\sqrt{m}T_m^0 &= \frac{1}{\sqrt{m}}\sum
_{i=1}^{m}\bigl(b_i^0 +
\tilde{b}_i - \hat{X}^{\top}a_i^0 -
\hat{X}^{\top}\tilde{a}_i^0\bigr)\\
&= \frac{1}{\sqrt
{m}}\sum_{i=1}^{m}\bigl(
\tilde{b}_i - X^{0\top}\tilde{a}_i\bigr) -
\sqrt{m}\bigl(\hat{X} - X^0\bigr)^{\top}\overline{a^0}
+ \mathit{rest},\\
 \mbox{where}\ \mathit{rest} &= -\bigl(\hat{X} - X^0\bigr)^\top\cdot
\frac{1}{\sqrt{m}}\sum_{i=1}^{m}\tilde
{a}_i = o_p(1)\cdot O_p(1) =
o_p(1).\notag\hspace*{41pt}\qedhere
\end{align}
\end{proof}

\begin{proof}[Proof of Lemma \ref{l:10}]
By Theorem \ref{th:4}\ref{4.b},
\[
\sqrt{m}\bigl(\hat{X} - X^0\bigr) = O_p(1).
\]
Therefore, expansion \eqref{STE} and condition \eqref{vii} imply that
\begin{equation}
\label{STE1} \sqrt{m}T_m^0 = \frac{1}{\sqrt{m}}\sum
_{i=1}^{m}\bigl(\tilde{b}_i -
X^{0\top
}\tilde{a}_i\bigr) + \sqrt{m}\bigl(\hat{X} -
X^0\bigr)^{\top}\mu_a + o_p(1).
\end{equation}
Next, by expansion \eqref{AEX} we get:
\begin{equation}
\label{STE2} \sqrt{m}T_m^0 = \frac{1}{\sqrt{m}}\sum
_{i=1}^{m}\bigl(\tilde{b}_i -
X^{0\top
}\tilde{a}_i + s^{\top}\bigl(a_i,b_i;X^0
\bigr)V_A^{-1}\mu_a\bigr) + o_p(1).
\end{equation}

The random vectors
\begin{equation}
\label{ZI} z_i := \tilde{b}_i - X^{0\top}
\tilde{a}_i + s^{\top
}\bigl(a_i,b_i;X^0
\bigr)V_A^{-1}\mu_a
\end{equation}
satisfy condition \eqref{BM} with the number $\delta$ from assumptions
\eqref{iv} and \eqref{v}. Let us find the variance--covariance matrix
$\varSigma_i$ of vector \eqref{ZI}. We have
\begin{equation}
\label{SI} \varSigma_i = \cov\bigl(\tilde{b}_i -
X^{0\top}\tilde{a}_i\bigr) + \cov\bigl(s^{\top
}
\bigl(a_i,b_i;X^0\bigr)~V_A^{-1}
\mu_a\bigr) + M + M^{\top}.
\end{equation}
Here (see \eqref{RM} and \eqref{EFE})
\begin{align}
M &:= \M s^{\top}\bigl(a_i,b_i;X^0
\bigr)~V_A^{-1}\mu_a\bigl(\tilde{b}_i
- \tilde{a}_i X^{0}\bigr)\notag\\
&=\M\bigl(X^{0\top}\tilde{a}_i a_i^{0\top}
- \tilde{b}_i a_i^{0\top
}\bigr)~V_A^{-1}
\mu_a\bigl(\tilde{b}_i^{\top} -
\tilde{a}_i^{\top} X^{0}\bigr);\notag\\
M &= -X^{0\top}\bigl(\M\tilde{a}_i a_i^{0\top}V_A^{-1}
\mu_a\tilde{a}_i^{\top
}\bigr)X^0 -
\M\tilde{b}_i a_i^{0\top}V_A^{-1}
\mu_a\tilde{b}_i^{\top}
\\
&= - a_i^{0\top}V_A^{-1}
\mu_a\sigma^2\bigl(\mathrm{I}_d +
X^{0\top}X^0\bigr) = M^{\top};\\
\cov\bigl(&\tilde{b}_i - X^{0\top}\tilde{a}_i\bigr) = \sigma^2\bigl(\mathrm{I}_d +
X^{0\top}X^0\bigr);\notag\\
 \cov\bigl(s^{\top}\bigl(a_i,b_i;&X^0\bigr)\,V_A^{-1}\mu_a\bigr)=\M s^{\top
}\bigl(a_i,b_i;X^0\bigr)\cdot Z \cdot s\bigl(a_i,b_i;X^0\bigr).\notag
\end{align}

Then
\begin{align*}
\varSigma_T &:= \lim_{m\to\infty}\frac{1}{m}(
\varSigma_1 +\cdots+ \varSigma_m)\\
&= \sigma^2\bigl(\mathrm{I}_d + X^{0\top}X^0
\bigr) + S\bigl(X^0,\mu_a\bigr) - 2\mu_a^{\top}V_A^{-1}
\mu_a\sigma^2\bigl(\mathrm{I}_d +
X^{0\top}X^0\bigr),
\end{align*}
and this coincides with the right-hand side of equality
\eqref{ACT}.\vadjust{\eject}

Finally, the desired convergence follows from expansion \eqref{STE2} by
Lemma \ref{l:20} and Slutsky's lemma.
\end{proof}

\begin{proof}[Proof of Lemma \ref{l:11}]
The convergence $\hat{\mu}_a\probone\mu_a$ is established by SLLN. The
convergence
\[
\hat{\varSigma}_T\prob\varSigma_T
\]
follows from Theorem \ref{th:8} (the role of $f$ and $\hat{f}$ is
played by $\mu_a$ and $\hat{\mu}_a$, respectively) and the consistency
of estimators $\hat{\sigma}^2$, $\hat{\mu}_a$, and
$\hat{V}_A$.
\end{proof}

\begin{proof}[Proof of Lemma \ref{l:13}]
(a) Hereafter, for symmetric matrices $A$ and $B$, notation $A\geq B$
$(A>B)$ means that the matrix $A-B$ is positive semidefinite (positive
definite).

Condition \eqref{vi} ensures the independence of the matrix components
$\varGamma_i$ in relation \eqref{CW}. Therefore,
\[
S\bigl(X^0,\mu_a\bigr) \geq\cov\bigl(
\bigl(X^{0\top}\tilde{a}_i - \hat{b}_i\bigr)~
\mu_a^{\top
}V_A^{-1}\mu_a
\bigr) = \sigma^2\bigl(\mu_a^{\top}V_A^{-1}
\mu_a\bigr)^2\bigl(\mathrm{I}_d +
X^{0\top}X^0\bigr).
\]

From equality \eqref{ACT} we have
\begin{equation}
\label{STIN} \varSigma_T\geq\sigma^2\bigl(1 -
\mu_a^{\top}V_A^{-1}\mu_a
\bigr)^2\cdot\mathrm{I}_d.
\end{equation}
By condition \eqref{viii}, $V_A>\mu_a\mu_a^{\top}$. In the case $\mu_a
= 0$, we get
$\varSigma_T\geq\sigma^2\mathrm{I}_d>0$, and in the case $\mu_a \neq0$,
we put $z = V_A^{-1}\mu_a$ and obtain:
\[
z^{\top}V_Az=\mu^{\top}_aV^{-1}_A
\mu_a>\bigl(\mu^{\top}_az\bigr)^2=
\bigl(\mu^{\top
}_aV^{-1}_A
\mu_a\bigr)^2;
\]
thus, $1 > \mu_a^{\top}V_A^{-1}\mu_a~$, and inequality \eqref{STIN}
implies $\varSigma_T > 0$.

Statement (b) follows from statement (a) and Lemma \ref{l:11}\eqref
{11.b}.
\end{proof}

\begin{proof}[Proof of Lemma \ref{l:15}]
(a) The local alternative \eqref{LAL} is corresponding to the
perturbation matrix
\[
G^0 := %
\begin{bmatrix}
g^{\top}(a_1^0)\\
\vdots\\
g^{\top}(a_m^0)
\end{bmatrix} %
.
\]
Model \eqref{LAL} can be rewritten as a perturbed model \eqref{OLSy},
\begin{equation}
\label{oLSyp} A^0 X^0 = B^0, \qquad A =
A^0 + \tilde{A}, \qquad B^{\mathit{per}} := B^0 +
\frac{1}{\sqrt{m}}G^0 + \tilde{B},
\end{equation}
or as a perturbed model \eqref{EM},
\begin{gather}
C^0 = \bigl[A^0 \ \ B^0\bigr], \qquad\tilde{C} =
[\tilde{A} \ \ \tilde{B}], \qquad C^{\mathit{per}} := \bigl[A \ \ B^{\mathit{per}}
\bigr], \label{EMP}
\\
C^0\cdot %
\begin{bmatrix}
X^0\\
-\mathrm{I}_d
\end{bmatrix} %
= 0. \label{EMO}
\end{gather}

Introduce the symmetric matrix
\[
N = C^{0\top}C^0 + \lambda_{\min}
\bigl(A^{0\top}A^0\bigr)\mathrm{I}_{n+d}.
\]
Due to condition \eqref{iii}, as $m\to\infty$,
\begin{equation}
\label{NAS} N = m N^0 + o(m), \quad N^0 > 0.\vadjust{\eject}
\end{equation}

Consider two matrices of size $(n+d)\times(n+d)$:
\begin{align}
M_1 &= N^{-1/2}C^{0\top}\bigl(C^{\mathit{per}} -
C^0\bigr)N^{-1/2},
\\
M_2 &= N^{-1/2}\bigl(\bigl(C^{\mathit{per}} - C^0
\bigr)^{\top}\bigl(C^{\mathit{per}} - C^0\bigr) -
\sigma^2 m \mathrm{I_{n+d}}\bigr)N^{-1/2}.
\end{align}
In view of the proof of Theorem 4.1 in \cite{skl}, for the convergence
\begin{equation}
\label{WCX} \hat{X} \prob X^0,
\end{equation}
it suffices to show that, as $m\to\infty$,
\begin{equation}
\label{CM} M_1\prob0, \qquad M_2\prob0,
\end{equation}
or taking into account \eqref{NAS}, that
\begin{align}
M_1^{\prime} &:= \frac{1}{m}C^{0\top}\tilde{C} +
\frac{1}{m}C^{0\top
}\cdot\frac{1}{\sqrt{m}}G^0
\prob0,\label{CM1}
\\
M_2^{\prime} &:= \frac{1}{m} \biggl( \biggl(\tilde{C} +
\frac{1}{\sqrt{m}}\bigl[0 \ \ G^0\bigr] \biggr)^{\top} \biggl(
\tilde{C} + \frac{1}{\sqrt{m}}\bigl[0 \ \ G^0\bigr] \biggr) -
\sigma^2\mathrm{I}_{n+d} \biggr) \prob0.\label{CM2}
\end{align}

We study the most interesting summands, those that contain $G^0$ (the
convergence of other summands was shown in the proof of Theorem 4.1 in
\cite{skl}). We have
\[
M_1^{\prime\prime} := \frac{1}{m^{3/2}}C^{0\top}G^0
= \frac{1}{m^{3/2}} %
\begin{bmatrix}
A^{0\top}G^0\\
X^{0\top} A^{0\top} G^0
\end{bmatrix} %
,
\]
and due to condition \eqref{ix}, as $m\to\infty$,
\begin{gather}
\frac{1}{m^{3/2}}A^{0\top}G^0 = \frac{1}{m^{3/2}}\sum
_{i=1}^{m}a_i^0
g^{\top}\bigl(a_i^0\bigr) = \frac
{O(1)}{m^{1/2}}
\to0,\quad M_1^{\prime\prime}\to0.
\end{gather}
Next, by condition \eqref{x},
\begin{gather}
M_2^{\prime\prime} := \frac{1}{m^2} G^{0\top}G^0
= \frac{1}{m^2}\sum_{i=1}^{m}g
\bigl(a_i^0\bigr)g^{\top}\bigl(a_i^0
\bigr),
\\
\big\|M_2^{\prime\prime}\big\| \leq\frac{\mathit{const}}{m^2}\sum
_{i=1}^{m}\big\|g\bigl(a_i^0
\bigr)\big\|^2\to0 \quad\text{as}~~ m\to\infty.
\end{gather}
Finally,
\begin{align}
M_2^{\prime\prime\prime} &:= \frac{1}{m^{3/2}}\tilde{C}^{\top}G^0
= \frac{1}{m^{3/2}}\sum_{i=1}^{m}
\tilde{c}_i g^{\top}\bigl(a_i^0
\bigr),
\\
\M\big\|M_2^{\prime\prime\prime}\big\|^2&\leq \frac{\mathit{const}}{m^{3}}\sum
_{i=1}^{m}\big\|g\bigl(a_i^0
\bigr)\big\|^2\to0 \quad\text{as} ~~m\to \infty,~~ M_2^{\prime\prime\prime}
\prob0.
\end{align}

We established the convergence in probability for the summands from
\eqref{CM1} and \eqref{CM2} that contain the perturbation $G^0$.
Therefore, \eqref{CM1} and \eqref{CM2} are satisfied, relation \eqref
{CM} is satisfied as well, and the results of \cite{skl} imply
convergence \eqref{WCX}.\vadjust{\eject}

The consistency of the estimator $\hat{\sigma}^2$ under local
alternatives $\textbf{H}_{1,m}$ is established by formula \eqref{ES}
and boils down to the consistency of $\hat{\sigma}^2$ under the null
hypothesis: the consistency of $\hat{X}$ has been proven already, and, moreover,
\begin{align}
\overline{b^{\mathit{per}}b^{\mathit{per},\top}} &= \frac{1}{m}\sum
_{i=1}^{m} \biggl(b_i +
\frac{1}{\sqrt{m}}g\bigl(a_i^0\bigr) \biggr)
\biggl(b_i + \frac{1}{\sqrt{m}}g\bigl(a_i^0
\bigr) \biggr)^{\top} = \frac{1}{m}\sum
_{i=1}^{m}b_i b_i^{\top}
+ o_p(1),
\\
\overline{ab^{\mathit{per},\top}}& = \overline{a b^{\top}} +
o_p(1).
\end{align}

(b) After we established the consistency of $\hat{X}$ under
alternatives $\textbf{H}_{1,m}$, we find an expansion similar to \eqref{AEX}:
\begin{align}
\sqrt{m}\bigl(\hat{X} - X^0\bigr) &= -V_A^{-1}
\frac{1}{\sqrt{m}}\sum_{i=1}^{m}s
\bigl(a_i,b_i^{\mathit{per}}; X^0\bigr) +
o_p(1)
\\
&= -V_A^{-1}\frac{1}{\sqrt{m}}\sum
_{i=1}^{m}s\bigl(a_i,b_i;
X^0\bigr) - V_A^{-1}\frac{1}{\sqrt{m}}\sum
_{i=1}^{m}s_i^{\mathit{per}} +
o_p(1).\label{AEX1}
\end{align}
Conditions \eqref{ix} and \eqref{x} ensure that, for perturbations
$s_i^{\mathit{per}}$ of the estimating function, we have (the main contribution
to $s_i^{\mathit{per}}$ is made by a linear summand $ab^{\top}$ from \eqref{EF}):
\begin{equation}
\label{PE} \frac{1}{\sqrt{m}}\sum_{i=1}^{m}s_i^{\mathit{per}}
= - \frac{1}{m}\sum_{i=1}^{m}
a_i^0 g^{\top}\bigl(a_i^0
\bigr) + o_p(1).
\end{equation}

Lemma \ref{l:7}, Theorem \ref{th:4}, and formulae \eqref{AEX1} and
\eqref{PE} imply the desired convergence of the normalized TLS
estimator.
\end{proof}

\begin{proof}[Proof of Lemma \ref{l:16}]
(a) Under the local alternatives, we have:
\[
\sqrt{m}T_m^0 |_{\textbf{H}_{1,m}} = \sqrt{m}T_m^0
|_{\textbf
{H}_{0}} + M\bigl(g\bigl(a^0\bigr)\bigr) - \sqrt{m} (\hat{X}
|_{\textbf{H}_{1,m}} - \hat{X} |_{\textbf{H}_{0}} )^{\top}\mu_a +
o_p(1).
\]
Expansions \eqref{AEX}, \eqref{AEX1}, and \eqref{PE} imply that
\[
\sqrt{m} (\hat{X} |_{\textbf{H}_{1,m}} - \hat{X} |_{\textbf
{H}_{0}} ) \prob
V_A^{-1}\cdot M\bigl(a^0 g^{\top}
\bigl(a^0\bigr)\bigr).
\]
Then, by Lemma \ref{l:10} and Slutsky's lemma,
\begin{align}
&\sqrt{m}T_m^0 |_{\textbf{H}_{1,m}}\convdistr
N(C_T,\varSigma_T),
\\
&\quad C_T = M\bigl(g\bigl(a^0\bigr)\bigr) - M\bigl(g
\bigl(a^0\bigr)a^{0\top}\bigr)\cdot V_A^{-1}
\mu_a~.
\end{align}

(b) Under the local alternatives, the estimators $\hat{\sigma}^2$, $\hat
{\mu}_a$, $\hat{V}_A$, and $\hat{X}$ are still consistent. Moreover,
\begin{equation}
\label{EACL} \hat{S}(\hat{\mu}_a) = \frac{1}{m}\sum
_{i=1}^{m}s^{\top}\bigl(a_i,b_i^{\mathit{per}};
\hat{X}\bigr)~\hat{V}_A^{-1}\hat{\mu}_a \hat{
\mu}_a^{\top}\hat {V}_A^{-1}~s
\bigl(a_i,b_i^{\mathit{per}}; \hat{X}\bigr)
\end{equation}
converges in probability to $S(X^0,\mu_a)$ because expression \eqref
{EACL} does not involve terms linear in $b_i^{\mathit{per}}$, and the
perturbation of the vectors $b_i$ does not modify the asymptotic
behavior of $\hat{S}(\hat{\mu}_a)$ in transition from $\textbf{H}_{0}$
to the local alternatives.

Thus, estimator \eqref{EACS} does converge in probability to matrix
\eqref{ACT}.
\end{proof}


\begin{thebibliography}{99}

\bibitem{car}
%
\begin{bbook}
\bauthor{\bsnm{Carroll}, \binits{R.J.}},
\bauthor{\bsnm{Ruppert}, \binits{D.}},
\bauthor{\bsnm{Stefanski}, \binits{L.A.}},
\bauthor{\bsnm{Crainiceanu}, \binits{C.M.}}:
\bbtitle{Measurement Error in Nonlinear Models: A Modern Perspective},
\bedition{2}nd edn.
\bsertitle{Monogr. Stat. Appl. Probab.},
vol.~\bseriesno{105},
p.~\bfpage{455}.
\bpublisher{Chapman \& Hall/CRC, Boca Raton, FL}
(\byear{2006}).
\bid{doi={\\10.1201/9781420010138}, mr={2243417}}
\end{bbook}
%
%
\OrigBibText
%
\begin{bbook}
\bauthor{\bsnm{Carroll}, \binits{R.J.}},
\bauthor{\bsnm{Ruppert}, \binits{D.}},
\bauthor{\bsnm{Stefanski}, \binits{L.A.}},
\bauthor{\bsnm{Crainiceanu}, \binits{C.M.}}:
\bbtitle{Measurement Error in Nonlinear Models},
\bedition{2}nd edn.
\bsertitle{Monographs on Statistics and Applied Probability},
vol.~\bseriesno{105},
p.~\bfpage{455}.
\bpublisher{Chapman \& Hall/CRC, Boca Raton, FL}
(\byear{2006}).
doi:\doiurl{10.1201/9781420010138}.
\bcomment{A modern perspective}.
\MR{2243417}
\end{bbook}
%
\endOrigBibText
\bptok{structpyb}%
\endbibitem

\bibitem{chku}
%
\begin{barticle}
\bauthor{\bsnm{Cheng}, \binits{C.-L.}},
\bauthor{\bsnm{Kukush}, \binits{A.G.}}:
\batitle{A goodness-of-fit test for a polynomial errors-in-variables model}.
\bjtitle{Ukr. Math. J.}
\bvolume{56}(\bissue{4}),
\bfpage{527}--\blpage{543}
(\byear{2004}).
\bid{doi={10.1007/s11253-005-\\0095-9}, mr={2105906}}
\end{barticle}
%
%
\OrigBibText
%
\begin{barticle}
\bauthor{\bsnm{Cheng}, \binits{C.-L.}},
\bauthor{\bsnm{Kukush}, \binits{A.G.}}:
\batitle{A goodness-of-fit test for a polynomial errors-in-variables model}.
\bjtitle{Ukrainian Mathematical Journal}
\bvolume{56}(\bissue{4}),
\bfpage{527}--\blpage{543}
(\byear{2004}).
doi:\doiurl{10.1007/s11253-005-0095-9}.
\MR{2105906}
\end{barticle}
%
\endOrigBibText
\bptok{structpyb}%
\endbibitem

\bibitem{gl}
%
\begin{barticle}
\bauthor{\bsnm{Gleser}, \binits{L.J.}}:
\batitle{Estimation in a multivariate ``errors in variables''
regression model:
large sample results}.
\bjtitle{Ann. Stat.}
\bvolume{9}(\bissue{1}),
\bfpage{24}--\blpage{44}
(\byear{1981}).
\bid{mr={0600530}}
\end{barticle}
%
%
\OrigBibText
%
\begin{barticle}
\bauthor{\bsnm{Gleser}, \binits{L.J.}}:
\batitle{Estimation in a multivariate ``errors in variables''
regression model:
large sample results}.
\bjtitle{Ann. Statist.}
\bvolume{9}(\bissue{1}),
\bfpage{24}--\blpage{44}
(\byear{1981}).
\MR{600530}
\end{barticle}
%
\endOrigBibText
\bptok{structpyb}%
\endbibitem

\bibitem{hallma}
%
\begin{barticle}
\bauthor{\bsnm{Hall}, \binits{P.}},
\bauthor{\bsnm{Ma}, \binits{Y.}}:
\batitle{Testing the suitability of polynomial models in errors-in-variables
problems}.
\bjtitle{Ann. Stat.}
\bvolume{35}(\bissue{6}),
\bfpage{2620}--\blpage{2638}
(\byear{2007}).
\bid{doi={10.1214/\\009053607000000361}, mr={2382660}}
\end{barticle}
%
%
\OrigBibText
%
\begin{barticle}
\bauthor{\bsnm{Hall}, \binits{P.}},
\bauthor{\bsnm{Ma}, \binits{Y.}}:
\batitle{Testing the suitability of polynomial models in errors-in-variables
problems}.
\bjtitle{Ann. Statist.}
\bvolume{35}(\bissue{6}),
\bfpage{2620}--\blpage{2638}
(\byear{2007}).
doi:\doiurl{10.1214/009053607000000361}.
\MR{2382660}
\end{barticle}
%
\endOrigBibText
\bptok{structpyb}%
\endbibitem

\bibitem{kupa}
%
\begin{barticle}
\bauthor{\bsnm{Kukush}, \binits{A.}},
\bauthor{\bsnm{Polekha}, \binits{M.}}:
\batitle{A goodness of-fit-test for a multivariate errors-in-variables model}.
\bjtitle{Theory Stoch. Process.}
\bvolume{12}(\bissue{3--4}),
\bfpage{63}--\blpage{74}
(\byear{2006}).
\bid{mr={2316566}}
\end{barticle}
%
%
\OrigBibText
%
\begin{barticle}
\bauthor{\bsnm{Kukush}, \binits{A.}},
\bauthor{\bsnm{Polekha}, \binits{M.}}:
\batitle{A goodness of-fit-test for a multivariate errors-in-variables model}.
\bjtitle{Theory Stoch. Process.}
\bvolume{12}(\bissue{3-4}),
\bfpage{63}--\blpage{74}
(\byear{2006}).
\MR{2316566}
\end{barticle}
%
\endOrigBibText
\bptok{structpyb}%
\endbibitem

\bibitem{kutsa}
%
\begin{barticle}
\bauthor{\bsnm{Kukush}, \binits{A.}},
\bauthor{\bsnm{Tsaregorodtsev}, \binits{Y.}}:
\batitle{Asymptotic normality of total least squares estimator in a
multivariate errors-in-variables model {$AX=B$}}.
\bjtitle{Mod. Stoch. Theory Appl.}
\bvolume{3}(\bissue{1}),
\bfpage{47}--\blpage{57}
(\byear{2016}).
\bid{doi={10.15559/16-VMSTA50}, mr={3490052}}
\end{barticle}
%
%
\OrigBibText
%
\begin{barticle}
\bauthor{\bsnm{Kukush}, \binits{A.}},
\bauthor{\bsnm{Tsaregorodtsev}, \binits{Y.}}:
\batitle{Asymptotic normality of total least squares estimator in a
multivariate errors-in-variables model {$AX=B$}}.
\bjtitle{Mod. Stoch. Theory Appl.}
\bvolume{3}(\bissue{1}),
\bfpage{47}--\blpage{57}
(\byear{2016}).
doi:\doiurl{10.15559/16-VMSTA50}.
\MR{3490052}
\end{barticle}
%
\endOrigBibText
\bptok{structpyb}%
\endbibitem

\bibitem{kuhu}
%
\begin{barticle}
\bauthor{\bsnm{Kukush}, \binits{A.}},
\bauthor{\bsnm{Van~Huffel}, \binits{S.}}:
\batitle{Consistency of elementwise-weighted total least squares
estimator in a
multivariate errors-in-variables model {$AX=B$}}.
\bjtitle{Metrika}
\bvolume{59}(\bissue{1}),
\bfpage{75}--\blpage{97}
(\byear{2004}).
\bid{doi={10.1007/s001840300272}, mr={2043433}}
\end{barticle}
%
%
\OrigBibText
%
\begin{barticle}
\bauthor{\bsnm{Kukush}, \binits{A.}},
\bauthor{\bsnm{Van~Huffel}, \binits{S.}}:
\batitle{Consistency of elementwise-weighted total least squares
estimator in a
multivariate errors-in-variables model {$AX=B$}}.
\bjtitle{Metrika}
\bvolume{59}(\bissue{1}),
\bfpage{75}--\blpage{97}
(\byear{2004}).
doi:\doiurl{10.1007/s001840300272}.
\MR{2043433}
\end{barticle}
%
\endOrigBibText
\bptok{structpyb}%
\endbibitem

\bibitem{petr}
%
\begin{bbook}
\bauthor{\bsnm{Petrov}, \binits{V.V.}}:
\bbtitle{Limit Theorems of Probability Theory: Sequences of Independent Random Variables}.
\bsertitle{Oxford Stud. Probab.},
vol.~\bseriesno{4},
p.~\bfpage{292}.
\bpublisher{The Clarendon Press, Oxford University Press, New York}
(\byear{1995}).
\bcomment{Oxford Science Publications}.
\bid{mr={1353441}}
\end{bbook}
%
%
\OrigBibText
%
\begin{bbook}
\bauthor{\bsnm{Petrov}, \binits{V.V.}}:
\bbtitle{Limit Theorems of Probability Theory}.
\bsertitle{Oxford Studies in Probability},
vol.~\bseriesno{4},
p.~\bfpage{292}.
\bpublisher{The Clarendon Press, Oxford University Press, New York}
(\byear{1995}).
\bcomment{Sequences of independent random variables, Oxford Science
Publications}.
\MR{1353441}
\end{bbook}
%
\endOrigBibText
\bptok{structpyb}%
\endbibitem

\bibitem{skl}
%
\begin{barticle}
\bauthor{\bsnm{Shklyar}, \binits{S.V.}}:
\batitle{Conditions for the consistency of the total least squares estimator in an
errors-in-variables linear regression model}.
\bjtitle{Theory Prob. Math. Statist.}
\bvolume{83},
\bfpage{148}--\blpage{162}
(\byear{2010}).
\end{barticle}
%
\bid{doi={10.1090/S0094-9000-2012-00850-8}, mr={2768857}}
%
\OrigBibText
%
\begin{botherref}
\oauthor{\bsnm{Shklyar}, \binits{S.V.}}:
Conditions for the consistency of the total least squares estimator in an
errors-in-variables linear regression model.
Theory Prob. Math. Statist.
(83),
148--162
(2010).
doi:\doiurl{10.1090/S0094-9000-2012-00850-8}.
\MR{2768857}
\end{botherref}
%
\endOrigBibText
\bptok{structpyb}%
\endbibitem

\bibitem{huva}
%
\begin{bbook}
\bauthor{\bsnm{Van~Huffel}, \binits{S.}},
\bauthor{\bsnm{Vandewalle}, \binits{J.}}:
\bbtitle{The Total Least Squares Problem}.
\bsertitle{Frontiers Appl. Math.},
vol.~\bseriesno{9},
p.~\bfpage{300}.
\bpublisher{Society for Industrial and Applied Mathematics (SIAM),
Philadelphia, PA}
(\byear{1991}).
\bid{doi={10.1137/1.9781611971002}, mr={1118607}}
\end{bbook}
%
%
\OrigBibText
%
\begin{bbook}
\bauthor{\bsnm{Van~Huffel}, \binits{S.}},
\bauthor{\bsnm{Vandewalle}, \binits{J.}}:
\bbtitle{The Total Least Squares Problem}.
\bsertitle{Frontiers in Applied Mathematics},
vol.~\bseriesno{9},
p.~\bfpage{300}.
\bpublisher{Society for Industrial and Applied Mathematics (SIAM),
Philadelphia, PA}
(\byear{1991}).
doi:\doiurl{10.1137/1.9781611971002}.
\MR{1118607}
\end{bbook}
%
\endOrigBibText
\bptok{structpyb}%
\endbibitem

\end{thebibliography}
\end{document}